\documentclass[11pt, reqno]{amsart}
\usepackage[utf8]{inputenc}
\usepackage{indentfirst, amssymb, amsmath, amsthm, mathrsfs, setspace, indentfirst, enumerate}
\usepackage[colorlinks=true,linkcolor=purple, citecolor=blue,urlcolor=magenta]{hyperref}
\usepackage{xcolor}
\usepackage{graphicx}
\usepackage{float}  
\newtheorem{theo}{Theorem}[section]
\newtheorem{lem}{Lemma}[section]

\newtheorem{defi}{Definition}[section]

\newcommand{\ol}{\overline}

\numberwithin{equation}{section}

\begin{document}

\title[Coefficient Problems and Sharp Determinant Estimates for $\mathcal{P}^{*}$]
{Coefficient Problems and Sharp Determinant Estimates for the Class $\mathcal{P}^{*}$}

\author[P. Das and N. Sarkar]{Pradip Das and Nabadwip Sarkar}
\address{Department of Mathematics, Raiganj University, Raiganj, West Bengal-733134, India.}
\email{pradipsmath@gmail.com}
\address{Amity School of Applied Sciences, Amity University Mumbai, Panvel, Navi Mumbai, Maharashtra-410206, India}
\email{nsarkar@mum.amity.edu, nabadwipsarkar52@gmail.com}
\renewcommand{\thefootnote}{}
\footnote{2020 \emph{Mathematics Subject Classification}: 30C50, 30C45}
\footnote{\emph{Key words and phrases}: Starlike functions, Hankel determinants, Hermitian-Toeplitz determinants, Coefficient problems, Logarithmic coefficients, Inverse functions}
\footnote{\emph{Corresponding Author}: Nabadwip Sarkar.}
\renewcommand{\thefootnote}{\arabic{footnote}}
\setcounter{footnote}{0}

\begin{abstract}
We study coefficient problems for the class $\mathcal{P}^{*}$ of normalized analytic functions $f$ in the unit disk satisfying the subordination condition
\[
f'(z) \prec e^z(1+\sin z), \qquad z\in\mathbb{D}.
\]
For functions in this class, we determine sharp bounds for the logarithmic coefficients $\gamma_n$ for $n=1,2,3,4$ and for the inverse logarithmic coefficients $\Gamma_n$ for $n=1,2,3$. In particular, we prove
\[
|\gamma_n| \le \frac{1}{n+1} \quad (n=1,2,3,4), \qquad
|\Gamma_1|,|\Gamma_2| \le \frac12,\quad |\Gamma_3|\le \frac{35}{48},
\]
with equality cases explicitly identified. Moreover, we establish sharp estimates for the moduli of the differences $|\gamma_2|-|\gamma_1|$ and $|\Gamma_2|-|\Gamma_1|$, yielding
\[
-\frac12 \le |\gamma_2|-|\gamma_1| \le \frac13,
\qquad
-\frac{1}{\sqrt{10}} \le |\Gamma_2|-|\Gamma_1| \le \frac13.
\]
Turning to determinant problems, we obtain sharp upper bounds for the second-order Hankel determinants associated with both the logarithmic and inverse logarithmic coefficients:
\[
\bigl|H_{2,1}(F_f/2)\bigr| \le \frac19,\qquad
\bigl|H_{2,1}(F_{f^{-1}}/2)\bigr| \le \frac{11}{96}.
\]
Finally, we derive the sharp two-sided estimate
\[
-\frac{9}{35} \le T_{3,1}(f) \le 1
\]
for the third-order Hermitian--Toeplitz determinant. All bounds are sharp, and the extremal functions are given explicitly. Our results extend and refine several known coefficient estimates for related subclasses of starlike and convex functions.
\end{abstract}

\thanks{Typeset by \AmS -\LaTeX}
\maketitle

\section{{\bf Preliminaries}}

Let $\mathcal{A}$ denote the class of normalized analytic functions $f$ defined on the open unit disk $\mathbb{D} := \{z : |z| < 1, \, z \in \mathbb{C}\}$, having the Taylor expansion
\begin{equation}\label{eqn1}
f(z) = z + \sum_{n=2}^{\infty} a_n(f) z^n, \quad z \in \mathbb{D}.
\end{equation}

Let $\mathcal{S}$ denote the subclass of $\mathcal{A}$ consisting of univalent functions. For each $f \in \mathcal{S}$, let $g = f^{-1}$ denote its inverse, which is analytic in a neighborhood of the origin and admits the series expansion
\begin{equation}\label{eqn2}
g(w) = f^{-1}(w) = w + \sum_{n=2}^{\infty} A_n w^n.
\end{equation}

By Koebe's one-quarter theorem, the radius of convergence of this series is at least $|w| < 1/4$. Applying variational methods, L\"owner \cite{Loewner1923} established the sharp estimate
\[
|A_n| \leq K_n, \quad \text{for all } n \in \mathbb{N},
\]
where $K_n = \frac{(2n)!}{n!(n+1)!}$, and $K(w) = w + \sum_{n=2}^{\infty} K_n w^n$ denotes the inverse of the Koebe function.

For $f \in \mathcal{S}$, the identity $f(f^{-1}(w)) = w$, together with the expansion in (\ref{eqn2}), yields the following expressions for the initial inverse coefficients:
\begin{equation}\label{eqn3}
\begin{cases}
A_2 = -a_2, \\
A_3 = -a_3 + 2a_2^2, \\
A_4 = -a_4 + 5a_2a_3 - 5a_2^3.
\end{cases}
\end{equation}

The logarithmic coefficients $\gamma_n$ of a function $f \in \mathcal{S}$ are defined by the relation
\begin{equation}\label{eqn4}
F_f(z) := \log\frac{f(z)}{z} = 2 \sum_{n=1}^{\infty} \gamma_n z^n, \quad z \in \mathbb{D}.
\end{equation}

Determining sharp upper bounds for $\gamma_n$ is generally a difficult problem. Milin \cite{IMM1} emphasized the importance of these coefficients in connection with the Bieberbach conjecture and proposed the following inequality for $f \in \mathcal{S}$ and $n \geq 2$:
\[
\sum_{m=1}^n \sum_{k=1}^m \left(k|\gamma_k|^2 - \frac{1}{k}\right) \leq 0.
\]
This was later established by de Branges \cite{LDB1}, thereby resolving the Bieberbach conjecture. For the Koebe function $k(z) = z/(1-z)^2$, the logarithmic coefficients are $\gamma_n = 1/n$. Although the Koebe function often serves as an extremal example, the conjecture $|\gamma_n| \leq 1/n$ is not valid in general \cite{PLD1}.

Differentiating (\ref{eqn4}) and comparing coefficients yields the following expressions for $\gamma_n$ in terms of $a_n$:
\begin{equation}\label{log1}
\begin{cases}
\gamma_1 = \frac{1}{2} a_2, \\
\gamma_2 = \frac{1}{2}\left(a_3 - \frac{1}{2}a_2^2\right), \\
\gamma_3 = \frac{1}{2}\left(a_4 - a_2a_3 + \frac{1}{3}a_2^3\right), \\
\gamma_4 = \frac{1}{2}\left(a_5 - a_2 a_4 + a_2^2 a_3 - \frac{1}{2} a_3^2 - \frac{1}{4} a_2^4\right).
\end{cases}
\end{equation}

The estimate $|\gamma_1| \leq 1$ is elementary, while the sharp bound for $\gamma_2$ is given by $|\gamma_2| \leq \frac{1}{2}(1 + 2e^{-2}) \approx 0.635$. For $n \geq 3$, general sharp bounds remain unknown, although progress has been made for various subclasses \cite{AA1, AA2, RK1}.

The logarithmic inverse coefficients $\Gamma_n$, introduced by Ponnusamy et al. \cite{Ponnusamy2018}, are defined via the inverse function $f^{-1}$:
\begin{equation}
F_{f^{-1}}(w) := \log\frac{f^{-1}(w)}{w} = 2 \sum_{n=1}^{\infty} \Gamma_n w^n, \quad |w| < \frac{1}{4}.
\end{equation}
The first three coefficients are given by
\begin{equation}\label{IG1}
\begin{cases}
\Gamma_1 = -\frac{1}{2} a_2, \\
\Gamma_2 = -\frac{1}{2} a_3 + \frac{3}{4} a_2^2, \\
\Gamma_3 = -\frac{1}{2} \left( a_4 - 4a_2 a_3 + \frac{10}{3} a_2^3 \right).
\end{cases}
\end{equation}

Ponnusamy et al. \cite{Ponnusamy2018} proved that for the class $\mathcal{S}$,
\[
|\Gamma_n| \leq \frac{1}{2n} \binom{2n}{n}, \quad n \in \mathbb{N},
\]
with equality only for the Koebe function and its rotations. They also obtained sharp bounds for the initial logarithmic inverse coefficients for several important geometric subclasses of $\mathcal{S}$.

Kowalczyk and Lecko \cite{12} recently initiated the study of the Hankel determinant whose entries are the logarithmic coefficients of $f \in \mathcal{S}$, defined by
\begin{eqnarray}\label{Hank1}
H_{q,n}\left(F_f / 2\right) = \left|\begin{array}{cccc}
\gamma_n & \gamma_{n+1} & \cdots & \gamma_{n+q-1} \\
\gamma_{n+1} & \gamma_{n+2} & \cdots & \gamma_{n+q} \\
\vdots & \vdots & \ddots & \vdots \\
\gamma_{n+q-1} & \gamma_{n+q} & \cdots & \gamma_{n+2(q-1)}
\end{array}\right|.
\end{eqnarray}

Analogously, the Hankel determinant $H_{q,n}\left(F_{f^{-1}} / 2\right)$ \cite{A1}, whose entries are the logarithmic coefficients of the inverse function, is given by
\begin{eqnarray}\label{Hank2}
H_{q,n}\left(F_{f^{-1}} / 2\right) = \left|\begin{array}{cccc}
\Gamma_n & \Gamma_{n+1} & \cdots & \Gamma_{n+q-1} \\
\Gamma_{n+1} & \Gamma_{n+2} & \cdots & \Gamma_{n+q} \\
\vdots & \vdots & \ddots & \vdots \\
\Gamma_{n+q-1} & \Gamma_{n+q} & \cdots & \Gamma_{n+2(q-1)}
\end{array}\right|.
\end{eqnarray}

It is also of interest to study estimates for the Hermitian--Toeplitz determinant. For $q,n \in \mathbb{N}$, the Hermitian--Toeplitz determinant (see \cite{hh,hh1}) of order $q$ associated with the coefficient sequence $\{a_k\}_{k\geq 1}$ of a function $f \in \mathcal{A}$ is defined as
\begin{equation}\label{HTdef}
T_{q,n}(f) :=
\begin{vmatrix}
a_n & a_{n+1} & \cdots & a_{n+q-1} \\
a_{n+1} & a_n & \cdots & a_{n+q-2} \\
\vdots & \vdots & \ddots & \vdots \\
a_{n+q-1} & a_{n+q-2} & \cdots & a_n
\end{vmatrix}.
\end{equation}

From (\ref{HTdef}), a direct computation yields the following third-order Hermitian--Toeplitz determinant:
\begin{equation}\label{HTexamples}
T_{3,1}(f) = 2\Re\!\left(a_2^2 \overline{a_3}\right) - 2|a_2|^2 - |a_3|^2 + 1.
\end{equation}

The investigation of Hermitian--Toeplitz determinants for various subclasses of normalized analytic functions was initiated in \cite{C,K} and subsequently extended in \cite{CKS1}. More recent contributions in this direction appear in \cite{K3,K2}. In particular, Cudna et al. \cite{C} established sharp bounds for the second- and third-order Hermitian--Toeplitz determinants for starlike and convex functions of order $\beta$. Later, Kumar et al. \cite{K3} obtained sharp estimates for the same determinants in the context of Janowski starlike and convex functions, thereby generalizing the results of \cite{C}. Additional related findings can be found in \cite{CK,VK,VKN,VKS}.

In recent years, considerable attention has been devoted to the study of Hankel determinants involving logarithmic coefficients for various subclasses of analytic functions, including starlike, convex, univalent, strongly starlike, and strongly convex functions (see \cite{12,KL2,KL3,STZ,ref35,ref36,ref37} and the references therein).

Before presenting the main results of this paper, we recall the concept of differential subordination, a fundamental tool in geometric function theory that provides an effective framework for investigating subclasses of $\mathcal{A}$.

\begin{defi}
Let $\Omega$ denote the class of analytic functions $\omega$ in $\mathbb{D}$ satisfying $\omega(0)=0$ and $|\omega(z)|<1$ for all $z \in \mathbb{D}$. Functions in $\Omega$ are called Schwarz functions. Each $\omega \in \Omega$ admits a power series expansion
\[
\omega(z) = \sum_{n=1}^{\infty} \omega_n z^n, \qquad z \in \mathbb{D}.
\]

For two analytic functions $f$ and $g$ on $\mathbb{D}$, we say that $f$ is subordinate to $g$ in $\mathbb{D}$, written $f \prec g$, if there exists a Schwarz function $\omega \in \Omega$ such that $f(z) = g(\omega(z))$ for all $z \in \mathbb{D}$. In particular, if $g$ is univalent in $\mathbb{D}$, then $f \prec g$ if and only if $f(0)=g(0)$ and $f(\mathbb{D}) \subset g(\mathbb{D})$.
\end{defi}

Tang et al. \cite{TAH} introduced the following class:
\[
\mathcal{P}^{*}
=
\left\{
f \in \mathcal{S} : f'(z) \prec e^z(1+\sin z), \quad z \in \mathbb{D}
\right\}.
\]

Several coefficient problems for functions in the class $\mathcal{P}^{*}$ were investigated in \cite{TAA}. In the present work, we address various coefficient problems for analytic functions in $\mathcal{P}^{*}$. We derive sharp estimates for the logarithmic coefficients and their inverse counterparts, including bounds for their differences. In addition, we investigate the second-order Hankel and Hermitian--Toeplitz determinants associated with these functions.

Let $\mathcal{P}$ denote the class of analytic functions $p$ on $\mathbb{D}$ satisfying $p(0)=1$ and $\Re p(z) > 0$ for all $z \in \mathbb{D}$. Every $p \in \mathcal{P}$ admits the series representation
\begin{equation}\label{p1}
p(z) = 1 + \sum_{n=1}^{\infty} c_n z^n, \quad z \in \mathbb{D}.
\end{equation}

Functions in $\mathcal{P}$ are called Carath\'{e}odory functions. It is well known that for $p \in \mathcal{P}$, the coefficients satisfy the sharp bound $|c_n| \leq 2$ for all $n \geq 1$ (see \cite{PLD1}). The Carath\'{e}odory class $\mathcal{P}$ and its coefficient estimates play a central role in deriving sharp bounds in geometric function theory.

The paper is organized as follows:

\begin{itemize}
\item In Section~2, we state some lemmas, which will be useful to establish our main results.

\item In Section~3.1, we obtain sharp bounds for the logarithmic coefficients $\gamma_n$ ($n=1,2,3,4$) of functions belonging to the class $\mathcal{P}^{*}$.

\item Section~3.2 is devoted to deriving sharp bounds for the logarithmic inverse coefficients $\Gamma_n$ ($n=1,2,3$) of functions in the same class.

\item In Section~3.3, we establish estimates for the differences between the logarithmic coefficients and the logarithmic inverse coefficients, namely $|\gamma_2|-|\gamma_1|$ and $|\Gamma_2|-|\Gamma_1|$, for functions in $\mathcal{P}^{*}$.

\item Section~3.4 deals with the second-order Hankel determinant $H_{2,1}(F_f/2)$ associated with the logarithmic coefficients and the second-order Hankel determinant $H_{2,1}(F_{f^{-1}}/2)$ associated with the logarithmic inverse coefficients of functions in the class $\mathcal{P}^{*}$.

\item Finally, Section~3.5 focuses on the third-order Hermitian--Toeplitz determinant $T_{3,1}(f)$ associated with functions belonging to the class $\mathcal{P}^{*}$.
\end{itemize}

\section{{\bf Auxiliary lemmas}}

Now we recall the following well-known result due to Cho et al. \cite{C12}.

\begin{lem}\label{L1} \cite[Lemma 2.4]{C12} If $p\in\mathcal{P}$ is of the form (\ref{p1}), then
\begin{eqnarray}
\label{c1}c_1 =2\tau_1,
\end{eqnarray}
\begin{eqnarray}\label{c2} c_2=2\tau_1^2 + 2(1 - |\tau_1|^2)\tau_2,
\end{eqnarray}
\begin{eqnarray}
\label{c3} c_3 = 2\tau_1^3+4(1-|\tau_1|^2)\tau_1\tau_2 - 2(1 - |\tau_1|^2)\ol{\tau_1}\tau_2^2 + 2(1 - |\tau_1|^2)(1 - |\tau_2|^2)\tau_3,
\end{eqnarray}
and
\begin{align}
\label{c4}
c_4={}&2\tau_1^4+4(1-|\tau_1|^2)\tau_1^2\tau_2
+2(1-|\tau_1|^2)\tau_2^2 \nonumber\\
&+4(1-|\tau_1|^2)(1-|\tau_2|^2)\tau_1\tau_3 \nonumber\\
&+2(1-|\tau_1|^2)(1-|\tau_2|^2)(1-|\tau_3|^2)\tau_4.
\end{align}
for some $\tau_1\in [0,1]$ and $\tau_2, \tau_3, \tau_4 \in \mathbb{\ol D}:= \{ z \in \mathbb{C} : |z| \leq 1 \}$.

For $\tau_1 \in \mathbb{T} := \{ z \in \mathbb{C} : |z| = 1 \}$, there is a unique function $p \in \mathcal{P}$ with $c_1$ as in (\ref{c1}), namely,
\[
p(z) = \frac{1 + \tau_1 z}{1 - \tau_1 z}, \quad z \in \mathbb{D}.
\]

For $\tau_1 \in \mathbb{D}$ and $\tau_2 \in \mathbb{T}$, there is a unique function $p \in \mathcal{P}$ with $c_1$ and $c_2$ as in (\ref{c1}) and (\ref{c2}), namely,
\[
p(z) = \frac{1 + (\ol \tau_1 \tau_2 + \tau_1) z + \tau_2 z^2}{1 + (\ol \tau_1 \tau_2 - \tau_1) z - \tau_2 z^2}, \quad z \in \mathbb{D}.
\]

For $\tau_1, \tau_2 \in \mathbb{D}$ and $\tau_3 \in \mathbb{T}$, there is a unique function $p \in \mathcal{P}$ with $c_1$, $c_2$, and $c_3$ as in (\ref{c1}-\ref{c3}), namely,
\[
p(z) = \frac{1 + (\ol\tau_2 \tau_3 + \ol\tau_1 \tau_2 + \tau_1)z+(\ol\tau_1\tau_3+\tau_1\ol\tau_2\tau_3+\tau_2)z^2+\tau_3z^3}{1+(\ol\tau_2\tau_3+\ol\tau_1\tau_2-\tau_1)z+(\ol\tau_1\tau_3-\tau_1\ol\tau_2\tau_3-\tau_2)z^2-\tau_3z^3},\;\;z\in\mathbb{D}.
\]
\end{lem}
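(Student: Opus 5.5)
This is a classical lemma, so my plan is to reconstruct it via the Schur algorithm applied to the Schwarz-type function associated with $p$. Set
\[
\omega(z)=\frac{p(z)-1}{p(z)+1},
\]
which lies in $\Omega$. Then $\varphi(z)=\omega(z)/z$ is a self-map of $\overline{\mathbb{D}}$, either a unimodular constant or a map of $\mathbb{D}$ into $\mathbb{D}$.

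I will run the Schur iteration
\[
\varphi_0=\varphi,\qquad \varphi_{k+1}(z)=\frac{1}{z}\,\frac{\varphi_k(z)-\tau_{k+1}}{1-\overline{\tau_{k+1}}\varphi_k(z)},\qquad \tau_{k+1}=\varphi_k(0).
\]
By Schwarz's lemma each $\tau_k\in\overline{\mathbb{D}}$, and the iteration stops at the first $k$ with $|\tau_k|=1$. Inverting the Möbius steps gives
\[
\varphi_k=\frac{\tau_{k+1}+z\varphi_{k+1}}{1+\overline{\tau_{k+1}}z\varphi_{k+1}}.
\]
Expanding this to low order gives the Taylor coefficients of $\omega$:
\[
\omega_1=\tau_1,\qquad \omega_2=(1-|\tau_1|^2)\tau_2,
\]
\[
\omega_3=(1-|\tau_1|^2)(1-|\tau_2|^2)\tau_3-(1-|\tau_1|^2)\overline{\tau_1}\tau_2^2,
\]
and similarly for $\omega_4$. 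A rotation $p(z)\mapsto p(e^{i\theta}z)$ lets me assume $\tau_1\in[0,1]$, as the lemma states. (Strictly, rotation rescales each $c_n$ by $e^{in\theta}$; the paper uses the lemma in this normalized form, and I will follow that.)

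Next I substitute into $p=(1+\omega)/(1-\omega)=1+2\omega+2\omega^2+2\omega^3+2\omega^4+\cdots$ and collect coefficients:
\[
c_1=2\omega_1,\qquad c_2=2(\omega_2+\omega_1^2),\qquad c_3=2(\omega_3+2\omega_1\omega_2+\omega_1^3),
\]
\[
c_4=2(\omega_4+2\omega_1\omega_3+\omega_2^2+3\omega_1^2\omega_2+\omega_1^4).
\]
Plugging in the $\tau$-expressions reproduces \eqref{c1}–\eqref{c3} directly. The main obstacle is $c_4$: computing $\omega_4$ from the third Schur step and checking that it combines with the other terms to give the compact form \eqref{c4}. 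I expect the cross terms in $\overline{\tau_1}$ and $\overline{\tau_2}$ to need careful bookkeeping. Since \eqref{c4} as printed has fewer conjugate terms than the full expansion should produce, I will verify the formula against Cho et al.\ and use the correct general expression, with the printed version recovered for $\tau_1$ real after simplification.

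For the uniqueness statements I will use the termination of the Schur algorithm. If $|\tau_k|=1$, then $\varphi_{k-1}\equiv\tau_k$ by the maximum principle. This forces $\omega$ to be a finite Blaschke product of degree $k$, uniquely determined by $\tau_1,\dots,\tau_k$.
\begin{itemize}
\item For $k=1$: $\omega=\tau_1z$, so $p=(1+\tau_1z)/(1-\tau_1z)$.
\item For $k=2$: $\omega=z(\tau_1+\tau_2z)/(1+\overline{\tau_1}\tau_2z)$, and computing $(1+\omega)/(1-\omega)$ gives the stated rational function.
\item For $k=3$: the analogous degree-three Blaschke product, obtained by composing the inverse Möbius steps, yields the displayed quotient after clearing denominators.
\end{itemize}
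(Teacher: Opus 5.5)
Your Schur-algorithm route is the standard one, and it is sound for everything except \eqref{c4}. It correctly produces $\omega_1,\omega_2,\omega_3$, the formulas \eqref{c1}--\eqref{c3}, and the three explicit functions in the uniqueness part; the paper itself gives no proof, only the citation. Two small points should be made explicit:
\begin{itemize}
\item The restriction $\tau_1\in[0,1]$ holds only under the normalization $c_1\ge 0$, which is how the cited lemma is actually stated. In general $\tau_1=c_1/2\in\overline{\mathbb{D}}$.
\item For uniqueness you need that $c_1,\dots,c_k$ determine $\tau_1,\dots,\tau_k$. When $|\tau_1|<1$ (resp.\ $|\tau_1|,|\tau_2|<1$) you can solve \eqref{c2} for $\tau_2$ (resp.\ \eqref{c3} for $\tau_3$), since $1-|\tau_1|^2\neq 0$ (resp.\ $(1-|\tau_1|^2)(1-|\tau_2|^2)\neq 0$). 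Hence every $p$ with the prescribed coefficients has these Schur parameters, and your termination argument then pins $p$ down.
\end{itemize}

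The genuine gap is your plan for \eqref{c4}: the printed formula cannot be ``recovered for $\tau_1$ real'', because it is false. Pushing your own expansion one step further gives
\[
\omega_4=(1-|\tau_1|^2)\Bigl[(1-|\tau_2|^2)\bigl((1-|\tau_3|^2)\tau_4-\overline{\tau_2}\tau_3^2-2\overline{\tau_1}\tau_2\tau_3\bigr)+\overline{\tau_1}^{\,2}\tau_2^3\Bigr].
\]
Substituting into $c_4=2(\omega_4+2\omega_1\omega_3+\omega_2^2+3\omega_1^2\omega_2+\omega_1^4)$ yields
\begin{align*}
c_4={}&2\tau_1^4+6(1-|\tau_1|^2)\tau_1^2\tau_2+2(1-|\tau_1|^2)(1-3|\tau_1|^2)\tau_2^2+2(1-|\tau_1|^2)\overline{\tau_1}^{\,2}\tau_2^3\\
&+4(1-|\tau_1|^2)(1-|\tau_2|^2)(\tau_1-\overline{\tau_1}\tau_2)\tau_3-2(1-|\tau_1|^2)(1-|\tau_2|^2)\overline{\tau_2}\tau_3^2\\
&+2(1-|\tau_1|^2)(1-|\tau_2|^2)(1-|\tau_3|^2)\tau_4,
\end{align*}
which is the known Kwon--Lecko--Sim formula. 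Even for real $\tau_1$ this differs from \eqref{c4}:
\begin{itemize}
\item the coefficient of $\tau_1^2\tau_2$ is $6$, not $4$;
\item the $\tau_2^2$-term carries the factor $1-3\tau_1^2$;
\item the $\tau_1^2\tau_2^3$, $\tau_1\tau_2\tau_3$ and $\overline{\tau_2}\tau_3^2$ terms survive.
\end{itemize}

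A concrete counterexample: for $0<t<1$,
\[
p(z)=\frac{1+t}{2}\cdot\frac{1+z}{1-z}+\frac{1-t}{2}\cdot\frac{1-z}{1+z}=1+2tz+2z^2+2tz^3+2z^4+\cdots\in\mathcal{P}.
\]
Here \eqref{c1} and \eqref{c2} force $\tau_1=t$ and $\tau_2=1$. The printed \eqref{c4} then gives $2+2t^2-2t^4>2=c_4$, whatever $\tau_3,\tau_4$ are; it even violates $|c_4|\le 2$. The corrected formula gives exactly $2$.

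So the statement as printed admits no proof. What your method proves, and what you should state, is the corrected expression. The error is harmless for the paper's theorems, since \eqref{c4} is never substituted: the $\gamma_4$ estimate goes through Lemma~\ref{L5} directly in terms of $c_1,\dots,c_4$.
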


\medskip

The following well-known result is due to Choi et al. \cite{CKS1}.

\begin{lem}\label{L2}\cite{CKS1} Let $A$, $B$, $C$ be real numbers and let
\[Y(A, B, C):= \max\limits_{z\in \ol{\mathbb{D}}}\left\lbrace |A+Bz+Cz^2|+1-|z|^2\right\rbrace.\]

\begin{enumerate} 
\item[(i)] If $AC\geq 0$, then
\[Y(A, B, C) =
\begin{cases}
|A|+|B|+|C|, & \text{if}\;\;\; |B|\geq 2(1-|C|), \\
1+|A|+\frac{B^2}{4(1-|C|)}, &\text{if}\;\;\; |B|<2(1-|C|).
\end{cases}
\]
\item[(ii)] If $AC<0$, then 
\[Y(A,B,C)=
\begin{cases}
1-|A|+\frac{B^2}{4(1-|C|)}, &\text{if}\;\;\; -4AC(C^{-2}-1) \leq B^2\; \text{and}\; |B|<2(1-|C|), \\
1+|A|+\frac{B^2}{4(1+|C|)}, &\text{if}\;\;\; B^2<\min\left\{4(1+|C|)^2, -4AC(C^{-2}-1) \right\}, \\
R(A,B,C), &\text{otherwise},
\end{cases}
\]
where
\[R(A,B,C):=
\begin{cases}
|A|+|B|-|C|, & \text{if}\;\;\; |C|(|B|+4|A|) \leq |AB|, \\
-|A|+|B|+|C|, & \text{if}\;\;\; |AB|\leq |C|(|B|-4|A|), \\
(|C|+|A| )\sqrt{1-\frac{B^2}{4AC}}, &\text{otherwise}.
\end{cases}
\]
\end{enumerate} 
\end{lem}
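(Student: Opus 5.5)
The plan is to reduce the two-variable maximization over $\overline{\mathbb{D}}$ to a one-variable problem in $r=|z|$. Write $z=re^{i\theta}$ with $0\le r\le 1$ and first maximize $|A+Bz+Cz^2|$ over $\theta$ for fixed $r$. Since $A,B,C$ are real,
\[
|A+Bz+Cz^2|^2 = A^2+B^2r^2+C^2r^4-2ACr^2 + 2Br(A+Cr^2)t + 4ACr^2t^2, \qquad t=\cos\theta\in[-1,1],
\]
using $\cos 2\theta = 2t^2-1$. This is a quadratic $\varphi_r(t)$ in $t$, and the sign of its leading coefficient $4ACr^2$ is what separates cases (i) and (ii). Replacing $z$ by $-z$ and $(A,B,C)$ by $-(A,B,C)$ shows that we may assume $B\ge 0$ and $A\ge 0$.

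\emph{Case $AC\ge 0$.} Here $\varphi_r$ is convex (or linear) in $t$, so its maximum on $[-1,1]$ is attained at an endpoint. With the sign normalization this is $t=1$, giving $\max_\theta|A+Bz+Cz^2| = |A|+|B|r+|C|r^2$. It then remains to maximize
\[
g(r)=1+|A|+|B|r-(1-|C|)r^2 \quad\text{on } [0,1].
\]
If $|C|\ge 1$, then $g$ is nondecreasing, so the maximum is $g(1)=|A|+|B|+|C|$; in this situation $|B|\ge 2(1-|C|)$ holds automatically. If $|C|<1$, then $g$ is concave with vertex at $r_0=|B|/(2(1-|C|))$. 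When $r_0\ge1$, i.e. $|B|\ge 2(1-|C|)$, we get $g(1)$; otherwise we get $g(r_0)=1+|A|+B^2/(4(1-|C|))$. This is exactly (i).

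\emph{Case $AC<0$.} Now $\varphi_r$ is strictly concave in $t$ with critical point
\[
t_0(r)=-\frac{B(A+Cr^2)}{4ACr}.
\]
If $|t_0(r)|\ge1$, the maximum over $t$ is at an endpoint and equals $\max\{|A+Br+Cr^2|,\,|A-Br+Cr^2|\}$, which is $|B|r+\bigl||A|-|C|r^2\bigr|$. If $|t_0(r)|<1$, a direct evaluation gives
\[
\varphi_r(t_0)=(|A|+|C|r^2)^2\Bigl(1-\frac{B^2}{4AC}\Bigr)\Big/\ldots,
\]
and I expect this to simplify to $\max_\theta|A+Bz+Cz^2| = (|A|+|C|r^2)\sqrt{1-B^2/(4ACr^2)\cdot(\cdots)}$; I would verify the exact form carefully. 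So $\max_\theta$ is a piecewise function $h(r)$, and we must then maximize $h(r)+1-r^2$ over $r\in[0,1]$.

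The anticipated candidates are the following.
\begin{itemize}
\item The interior critical points of $|A|\pm|B|r\mp|C|r^2+1-r^2$. These produce the values $1+|A|+B^2/(4(1+|C|))$ and $1-|A|+B^2/(4(1-|C|))$; the latter corresponds to the branch where $|C|r^2\ge|A|$.
\item The boundary point $r=1$, which reduces to maximizing $|A+Be^{i\theta}+Ce^{2i\theta}|$ on the circle. This is the quantity $R(A,B,C)$: its three subcases correspond to $t_0(1)\ge 1$, $t_0(1)\le -1$, and $|t_0(1)|<1$, giving respectively $|A|+|B|-|C|$, $-|A|+|B|+|C|$, and $(|A|+|C|)\sqrt{1-B^2/(4AC)}$.
\end{itemize}
The conditions involving $-4AC(C^{-2}-1)$ should arise as the requirement that the critical $r$ (e.g. $r^2=|A|/|C|$ at the switch between branches, combined with $r_0=|B|/(2(1\mp|C|))$) lies in the correct branch with $|t_0|\ge1$.

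The main obstacle is this last bookkeeping for $AC<0$. One has to check that the interior maximum in $r$ never occurs on the branch $|t_0(r)|<1$ except at $r=1$. Equivalently, on that branch $h(r)+1-r^2$ should be increasing, which I would try to prove by showing $\frac{d}{dr}\bigl[(|A|+|C|r^2)\sqrt{\cdot}\bigr]\ge 2r$ whenever $|t_0(r)|<1$. One also has to compare the competing candidate values precisely on the regions stated in (ii). I would handle this by fixing $|C|$ and partitioning the $(|A|,|B|)$ plane by the curves $B^2=4(1\pm|C|)^2$ and $B^2=-4AC(C^{-2}-1)$, and on each region comparing the finitely many candidate values directly.
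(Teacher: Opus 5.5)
The paper does not prove this lemma; it quotes it from Choi, Kim and Sugawa, so your argument has to stand on its own. Several parts are correct: the computation of $\varphi_r$, the normalization $A,B\ge 0$, all of case (i), and the identification of $R(A,B,C)$ with $\max_\theta|A+Be^{i\theta}+Ce^{2i\theta}|$ via the position of $t_0(1)$. There is, however, a genuine gap in case (ii), exactly at the step you flag, and the route you propose for it fails.

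First, the interior value does not have the form you guess. From $(A+Cr^2)^2=(A-Cr^2)^2+4ACr^2$ one gets $\varphi_r(t_0)=(A-Cr^2)^2\bigl(1-\frac{B^2}{4AC}\bigr)$, so on the branch $|t_0(r)|<1$
\[
h(r)+1-r^2=1+K|A|+(K|C|-1)\,r^2,\qquad K:=\sqrt{1-\frac{B^2}{4AC}},
\]
with $K$ independent of $r$. Second, the claim you plan to prove, that this branch is increasing, is false whenever $K|C|<1$. Since $K^2C^2=C^2+B^2|C|/(4|A|)$, this happens exactly when $B^2<-4AC(C^{-2}-1)$, i.e.\ throughout the regime of the second formula in (ii). For instance, if $B=0$ and $0<|C|<1$, every $r\in(0,1]$ lies on this branch and the function is strictly decreasing. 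The maximum $1+|A|$ is then attained at $r=0$, which is not on your candidate list. Your list also omits the breakpoints between branches, which remain legitimate candidates until you know the pieces glue smoothly.

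The missing organizing argument is the following. Normalize $A>0$, $B\ge 0$, $C<0$. Then $t_0(r)=\frac{B}{4|AC|}\bigl(\frac{|A|}{r}-|C|r\bigr)$ is strictly decreasing, so for $B>0$ the half-line $[0,\infty)$ splits into three pieces:
\begin{enumerate}
\item[(a)] $[0,r_a]$, where the maximum over $\theta$ is at $z=r$ and $\Phi(r)=1+|A|+|B|r-(1+|C|)r^2$;
\item[(b)] $[r_a,r_b]$, the middle branch above;
\item[(c)] $[r_b,\infty)$, where the maximum is at $z=-r$ and $\Phi(r)=1-|A|+|B|r-(1-|C|)r^2$.
\end{enumerate}
The clipped maximizer $t=\pm1$ is a critical point of $\varphi_r$ exactly when $t_0(r)=\pm1$. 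Hence $\Phi$ is $C^1$ across $r_a$ and $r_b$, which disposes of the breakpoints, and $\Phi'(r_a)$ and $\Phi'(r_b)$ both have the sign of $K|C|-1$.
\begin{enumerate}
\item[(1)] If $K|C|<1$, then $\Phi$ increases up to $r_1=|B|/(2(1+|C|))<r_a$ and decreases afterwards. This gives $1+|A|+B^2/(4(1+|C|))$ when $r_1<1$, and $\Phi(1)=R(A,B,C)$ otherwise.
\item[(2)] If $K|C|\ge 1$, then $\Phi$ is nondecreasing up to $r_2=|B|/(2(1-|C|))\ge r_b$, or on all of $[0,1]$ if $|C|\ge 1$. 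This gives $1-|A|+B^2/(4(1-|C|))$ when $|B|<2(1-|C|)$, and $R(A,B,C)$ otherwise.
\end{enumerate}
These are precisely the three cases of (ii). Your proposal defers exactly this step to a region-by-region comparison of candidate values that you do not carry out, and that comparison is where all the content of (ii) lies.
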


\begin{lem}\label{L3} \cite{MM}
Let $p \in \mathcal{P}$ be given by \eqref{p1}. Then
\[
\left| c_2 - v c_1^2 \right| \le 
\begin{cases}
-4v + 2, & v < 0, \\
2, & 0 \leq v \leq 1, \\
4v - 2, & v > 1.
\end{cases}
\]

Moreover, for $v < 0$ or $v > 1$, equality holds if and only if
\[
p(z) = \frac{1+z}{1-z} \quad \text{or one of its rotations}.
\]

For $0 < v < 1$, equality holds if and only if
\[
p(z) = \frac{1+z^2}{1-z^2} \quad \text{or one of its rotations}.
\]
\end{lem}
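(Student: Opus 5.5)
The plan is to reduce everything to the parametrization of Lemma~\ref{L1}. First I note that the functional $|c_2 - v c_1^2|$ is invariant under rotations $p(z)\mapsto p(e^{i\theta}z)$: these send $c_n\mapsto e^{in\theta}c_n$, so $c_2 - v c_1^2$ is multiplied by $e^{2i\theta}$. Hence it is harmless that Lemma~\ref{L1} normalizes $\tau_1\in[0,1]$. Substituting \eqref{c1} and \eqref{c2} gives, with $t:=\tau_1\in[0,1]$ and $\tau_2\in\ol{\mathbb{D}}$,
\[
c_2 - v c_1^2 = 2(1-2v)t^2 + 2(1-t^2)\tau_2 .
\]

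By the triangle inequality and $|\tau_2|\le 1$,
\[
|c_2 - v c_1^2| \le 2|1-2v|\,t^2 + 2(1-t^2) = 2 + 2\bigl(|1-2v|-1\bigr)t^2 .
\]
The right-hand side is affine in $t^2\in[0,1]$, so its maximum is $\max\{2,\,2|1-2v|\}$. Since $|1-2v|\le 1$ exactly when $0\le v\le 1$, this maximum is $2$ on $[0,1]$. For $v<0$ it equals $2(1-2v)=2-4v$, and for $v>1$ it equals $2(2v-1)=4v-2$. This gives the three stated bounds.

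For the equality statements I will trace when both inequalities are tight. If $v<0$ or $v>1$, then $|1-2v|>1$, so the affine function is strictly increasing in $t^2$ and equality forces $t=\tau_1=1$. By the uniqueness part of Lemma~\ref{L1}, this gives $p(z)=(1+z)/(1-z)$. Undoing the normalization yields its rotations, and conversely these functions clearly attain the bound.

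If $0<v<1$, then $|1-2v|<1$, so equality forces $t=0$ and $|\tau_2|=1$. The uniqueness clause of Lemma~\ref{L1} for $\tau_1\in\mathbb{D}$, $\tau_2\in\mathbb{T}$ then gives $p(z)=(1+\tau_2 z^2)/(1-\tau_2 z^2)$, a rotation of $(1+z^2)/(1-z^2)$. These functions have $c_1=0$ and $|c_2|=2$, so they attain equality.

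The only delicate point is the bookkeeping in the equality cases. One must check that the strict monotonicity in $t^2$ genuinely pins down $\tau_1$, and that the uniqueness assertions of Lemma~\ref{L1} apply after the rotation normalization. The inequality itself is routine once the parametrization is inserted.
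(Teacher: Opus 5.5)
The paper gives no proof of this lemma. It is quoted from Ma and Minda and used as a black box in Theorems~\ref{T1}(ii) and~\ref{T2}(ii). Your argument is correct and complete, and it makes the lemma self-contained relative to the paper, since it uses only Lemma~\ref{L1}.

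The identity $c_2-vc_1^2=2(1-2v)\tau_1^2+2(1-\tau_1^2)\tau_2$, the triangle inequality, and linearity in $\tau_1^2$ give all three bounds. Your equality analysis is also sound. The chain of inequalities is tight only when $\tau_1=1$ (if $|1-2v|>1$), or when $\tau_1=0$ and $|\tau_2|=1$ (if $|1-2v|<1$). The uniqueness clauses of Lemma~\ref{L1} then identify $p$.

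In substance this is the classical proof in different clothing. Write $\omega=(p-1)/(p+1)=\omega_1z+\omega_2z^2+\cdots$, so that $c_1=2\omega_1$ and $c_2=2(\omega_2+\omega_1^2)$. Your key estimate is then exactly the Schwarz--Pick bound $|\omega_2|\le 1-|\omega_1|^2$, i.e.\ $|c_2-\tfrac12c_1^2|\le 2-\tfrac12|c_1|^2$. Splitting $c_2-vc_1^2=(c_2-\tfrac12c_1^2)+(\tfrac12-v)c_1^2$ then finishes the inequality. That route is more elementary. Going through Lemma~\ref{L1} instead gives you the equality cases essentially for free, because its uniqueness statements encode the equality case of Schwarz--Pick; the cost is relying on a heavier parametrization result.

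Two minor remarks. First, the rotation normalization is needed only because the paper states Lemma~\ref{L1} with $\tau_1\in[0,1]$. With $\tau_1\in\ol{\mathbb{D}}$ the same computation works directly with $t=|\tau_1|$. Second, you are right not to claim anything about equality at $v=0$ or $v=1$, where the extremal set is larger. For $v=0$, every convex combination of $(1+z)/(1-z)$ and $(1-z)/(1+z)$ gives $|c_2|=2$.
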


\begin{lem}\label{L4}\cite{A1}
Let $p \in \mathcal{P}$ be given by \eqref{p1} with $0 \leq B \leq 1$ and $B(2B - 1) \leq D \leq B$. Then
\[
\left| c_3 - 2 B c_1 c_2 + D c_1^3 \right| \leq 2.
\]
\end{lem}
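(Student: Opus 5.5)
The plan is to pass from Carathéodory coefficients to Schwarz coefficients. Then the functional in Lemma~\ref{L4} becomes a cubic functional of a Schwarz function, and the known sharp estimate of Prokhorov and Szynal applies to it. A direct attack through Lemma~\ref{L1} and Lemma~\ref{L2} looks unpromising. After the triangle inequality it decouples the $\tau_1^3$ term from the $\tau_2$ terms. For instance, with $B=D=0$ it already yields a bound $2(3t-2t^3)$, which exceeds $2$ near $t=1/\sqrt2$. So the correlation between the terms must be kept.

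\textbf{Step 1: rewrite in Schwarz coefficients.} Write $p=(1+\omega)/(1-\omega)$ with $\omega(z)=\sum_{n\ge1}\omega_n z^n\in\Omega$. Comparing coefficients gives
\[
c_1=2\omega_1,\qquad c_2=2(\omega_2+\omega_1^2),\qquad c_3=2(\omega_3+2\omega_1\omega_2+\omega_1^3).
\]
Substituting, a routine expansion yields
\[
c_3-2Bc_1c_2+Dc_1^3=2\left(\omega_3+\mu\,\omega_1\omega_2+\nu\,\omega_1^3\right),
\qquad \mu=2-4B,\quad \nu=1-4B+4D .
\]
It therefore suffices to show $|\omega_3+\mu\omega_1\omega_2+\nu\omega_1^3|\le1$.

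\textbf{Step 2: translate the hypotheses into the $(\mu,\nu)$-plane.} The condition $0\le B\le1$ gives $\mu\in[-2,2]$. The condition $B(2B-1)\le D\le B$ gives $\nu\le 1$ and
\[
\nu\ \ge\ 1-4B+4B(2B-1)=8B^2-8B+1=\tfrac{\mu^2-2}{2}.
\]
So the admissible set is $\{|\mu|\le2,\ (\mu^2-2)/2\le\nu\le1\}$.

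\textbf{Step 3: invoke Prokhorov--Szynal.} Their lemma states that $|\omega_3+\mu\omega_1\omega_2+\nu\omega_1^3|\le1$ holds on the union of two regions:
\[
D_1=\{|\mu|\le\tfrac12,\ -1\le\nu\le1\},
\]
\[
D_2=\{\tfrac12\le|\mu|\le2,\ \tfrac4{27}(|\mu|+1)^3-(|\mu|+1)\le\nu\le1\}.
\]
For $|\mu|\le\frac12$ the lower bound $(\mu^2-2)/2\ge-1$ is immediate, so the admissible points lie in $D_1$. For $\frac12\le|\mu|\le2$, put $s=|\mu|+1\in[\frac32,3]$. One must then check the polynomial inequality
\[
\frac{(s-1)^2-2}{2}\ \ge\ \frac{4s^3}{27}-s,
\quad\text{equivalently}\quad
g(s):=\frac{8s^3}{27}-s^2+1\le 0 .
\]

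\textbf{Step 4: the main obstacle.} This one-variable inequality is the only real work, and the plan is as follows. We have $g'(s)=\tfrac{8s^2}{9}-2s$, which vanishes only at $s=0$ and $s=\tfrac94$ in the relevant range. Hence $g$ is convex on $[\frac32,3]$, being a cubic with positive leading coefficient past its inflection point $s=\frac98$. Its maximum on the interval is therefore attained at an endpoint. The endpoint values are $g(3)=8-9+1=0$ and $g(\frac32)=1-\frac94+1=-\frac14$. Thus $g\le0$ on the whole interval, with equality only at $s=3$, that is $|\mu|=2$; this corresponds to $B\in\{0,1\}$ and touches the boundary of $D_2$.

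Consequently the admissible parameter set lies in $D_1\cup D_2$, and the bound $|c_3-2Bc_1c_2+Dc_1^3|\le2$ follows from Step~1.
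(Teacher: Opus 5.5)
Your proof is correct. It does not follow the paper's route, because the paper gives no proof of Lemma~\ref{L4}: it quotes the lemma from the literature and uses it as a black box. Your computations all hold:
\begin{itemize}
\item with $p=(1+\omega)/(1-\omega)$ one gets $c_3-2Bc_1c_2+Dc_1^3=2(\omega_3+\mu\omega_1\omega_2+\nu\omega_1^3)$, where $\mu=2-4B$ and $\nu=1-4B+4D$;
\item the hypotheses become exactly $|\mu|\le 2$ and $(\mu^2-2)/2\le\nu\le 1$;
\item containment in $D_2$ reduces to $27g(s)=(s-3)(8s^2-3s-9)\le 0$ on $[\tfrac32,3]$, which your convexity argument settles (the factorization gives it in one line).
\end{itemize}

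What you have done is reduce one cited lemma to another, the Prokhorov--Szynal estimate. The gain is transparency. The $(B,D)$-hypotheses define a region bounded below by the parabola $\nu=(\mu^2-2)/2$, and this region lies inside $D_1\cup D_2$. Its lower edge is strictly interior except at $B\in\{0,\tfrac12,1\}$. So the condition $D\ge B(2B-1)$ is stronger than needed and could be replaced by the pull-back of the Prokhorov--Szynal lower boundary. For instance, at $B=\tfrac14$ one may take $D\ge -\tfrac{11}{54}$ instead of $D\ge-\tfrac18$. The cost is that the argument is not self-contained, and the Prokhorov--Szynal theorem is a considerably deeper result than the lemma it is used to prove.

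One side remark in your plan is inaccurate. The loss you exhibit for the route through Lemmas~\ref{L1} and~\ref{L2} comes only from splitting off the $\tau_1^3$ term. Instead, keep that term in the constant, writing the functional (for $0\le\tau_1<1$) as $2(1-\tau_1^2)\bigl[A+\mu\tau_1\tau_2-\tau_1\tau_2^2+(1-|\tau_2|^2)\tau_3\bigr]$ with $A=\nu\tau_1^3/(1-\tau_1^2)$, as the paper does for $\Gamma_3$ in Theorem~\ref{T2}. Maximizing first over $\tau_3$ and then over $\tau_2$ via Lemma~\ref{L2} is then exact. That route loses nothing; it is only longer because of the case analysis.
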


\begin{lem}\label{L5}\cite{Ravichandran2015}
Let $p \in \mathcal{P}$ be given by \eqref{p1}. If $\alpha, \beta, \gamma, \lambda$ satisfy
\[
0 < \alpha < 1, \quad 0 < \lambda < 1,
\]
and
\begin{align*}
	8&\lambda(1 - \lambda) \Big\{ (\alpha\beta - 2\gamma)^2 + (\alpha(\lambda + \alpha) - \beta)^2 \Big\}
	+ \alpha(1 - \alpha)(\beta - 2\lambda \alpha)^2\\
	& \leq 4 \alpha^2 (1 - \alpha)^2 \lambda (1 - \lambda),
\end{align*}
then
\[
|\gamma c_1^4 + \lambda c_2^2 + 2 \alpha c_1 c_3 - \frac{3}{2} \beta c_1^2 c_2 - c_4 |\leq 2.
\]
\end{lem}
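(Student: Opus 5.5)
The statement is Lemma~\ref{L5}, quoted from \cite{Ravichandran2015}. To prove it we would pass to the Schur parameters of Lemma~\ref{L1} and reduce the problem to a sequence of one-variable maximisations. Write
\[
\Phi:=\gamma c_1^4+\lambda c_2^2+2\alpha c_1c_3-\tfrac32\beta c_1^2c_2-c_4 .
\]
We may rotate $p$ so that $c_1=2\tau_1$ with $\tau_1\in[0,1]$; this rotation multiplies $\Phi$ by a unimodular factor only. Then substitute (\ref{c1})--(\ref{c4}).

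The parameter $\tau_4$ occurs only through the last term of (\ref{c4}). Setting $t=\tau_1$, this lets us write
\[
\Phi=-2(1-t^2)(1-|\tau_2|^2)\Bigl[(1-|\tau_3|^2)\tau_4+L\,\tau_3+M\Bigr]+N .
\]
Here
\[
L=2t(1-\alpha'),\qquad 1-\alpha' \text{ coming from } 4(1-t^2)(1-|\tau_2|^2)t\tau_3\bigl(2\alpha-1\bigr),
\]
so $L$ is linear in $t$ with coefficient $2(1-2\alpha)$ up to sign. The quantities $M$ and $N$ are polynomials in $t,\tau_2,\overline{\tau_2}$.

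\textbf{Step 1 (the $\tau_4$ and $\tau_3$ variables).} Using $|\tau_4|\le1$ and the triangle inequality, we would aim for
\[
|\Phi|\le 2(1-t^2)(1-|\tau_2|^2)\bigl(1-|\tau_3|^2+|L||\tau_3|\bigr)+|\tilde N| ,
\]
after first grouping into $\tilde N$ the $\tau_3$-free pieces that are not multiplied by $(1-|\tau_2|^2)$. Maximising in $|\tau_3|\in[0,1]$ gives $1+L^2/4$ when $|L|\le2$. Since $L^2=4t^2(1-2\alpha)^2\le 4$ (because $0<\alpha<1$ and $t\le1$), this case always applies. Hence
\[
|\Phi|\le 2(1-t^2)\Bigl[(1-|\tau_2|^2)\bigl(1+t^2(1-2\alpha)^2\bigr)+\bigl|A+B\tau_2+C\tau_2^2\bigr|\Bigr]+2t^4\,|\kappa| .
\]
The coefficients are explicit:
\begin{itemize}
\item $C=\lambda(1-t^2)-1$ collects the $\tau_2^2$ terms from $\lambda c_2^2$ and $c_4$;
\item $B$ is linear in $t^2$ and involves $\lambda,\alpha,\beta$;
\item $A$ is proportional to $t^2$;
\item $\kappa=8\gamma+2\lambda+8\alpha-6\beta-1$ is the pure $t^4$ coefficient, i.e.\ the value of $\Phi/2$ at $p=(1+z)/(1-z)$.
\end{itemize}

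\textbf{Step 2 (the $\tau_2$ variable).} With $t$ fixed, the bracket has the shape $|A+B\tau_2+C\tau_2^2|+K(1-|\tau_2|^2)$. After rescaling by $K$, this is exactly the quantity maximised in Lemma~\ref{L2}. The coefficients are real, so Lemma~\ref{L2} applies directly. The hypotheses $0<\lambda<1$ and $0<\alpha<1$ pin down the signs of $A$ and $C$, and hence which branch of Lemma~\ref{L2} is relevant. We expect the generic interior branch, giving a bound of the form $1+|A|+B^2/(4(1-|C|))$ or its $AC<0$ analogue.

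\textbf{Step 3 (the $t$ variable), the main obstacle.} After Step 2 we are left with a single explicit function $g(t)$ on $[0,1]$. We need $g(t)\le 2$, with equality at $t=0$, which corresponds to $p=(1+z^4)/(1-z^4)$. Near $t=0$ the bound reads $2+2t^2Q+O(t^4)$. So the required inequality amounts to the nonpositivity of a quadratic form in the two combinations $\alpha\beta-2\gamma$ and $\alpha(\lambda+\alpha)-\beta$, together with the term $\beta-2\lambda\alpha$. We would expect the stated hypothesis
\[
8\lambda(1-\lambda)\{\cdots\}+\alpha(1-\alpha)(\beta-2\lambda\alpha)^2\le4\alpha^2(1-\alpha)^2\lambda(1-\lambda)
\]
to be exactly the discriminant (Cauchy--Schwarz) condition making this work uniformly in $t$.

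If the direct triangle-inequality route loses too much, we would instead follow the original strategy. That strategy writes $p=(1+w)/(1-w)$ and reduces $\Phi$ to $-2\bigl(w_4+a\,w_1w_3+b\,w_2^2+c\,w_1^2w_2+d\,w_1^4\bigr)$. It then bounds this combination by $1$ using the Carlson-type inequalities $|w_3|\le 1-|w_1|^2-|w_2|^2/(1+|w_1|)$ and the analogous bound on $|w_4|$ coming from the positivity of the Schur--Toeplitz form. Verifying that the final two-variable maximum is at most $1$ under the stated hypothesis is the step we expect to be hardest.
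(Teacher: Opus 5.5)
Your proposal does not yet prove the lemma. The Schur-parameter route rests on a misprinted formula, and the key step is conjectured rather than proved.

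\textbf{Steps 1--2 use an incorrect formula for $c_4$.} Your main route rests on formula (\ref{c4}) as printed in Lemma~\ref{L1}, and that formula is wrong. Test it on the function Lemma~\ref{L1} itself assigns to $\tau_1=t\in(0,1)$, $\tau_2=1$, namely $p(z)=\frac{1+2tz+z^2}{1-z^2}=\frac{1+t}{2}\cdot\frac{1+z}{1-z}+\frac{1-t}{2}\cdot\frac{1-z}{1+z}$. Its fourth coefficient is $c_4=2$, while (\ref{c4}) gives $2+2t^2(1-t^2)$. The correct representation (Kwon--Lecko--Sim) is
\begin{align*}
c_4={}&2\tau_1^4+2(1-|\tau_1|^2)\tau_2\bigl(3\tau_1^2+(1-3|\tau_1|^2)\tau_2+\overline{\tau_1}^{\,2}\tau_2^2\bigr)\\
&+2(1-|\tau_1|^2)(1-|\tau_2|^2)\bigl(2\tau_1-2\overline{\tau_1}\tau_2-\overline{\tau_2}\tau_3\bigr)\tau_3\\
&+2(1-|\tau_1|^2)(1-|\tau_2|^2)(1-|\tau_3|^2)\tau_4 .
\end{align*}
With $\tau_1=t$, the $\tau_3$-dependent part of $\Phi$ is then $(1-t^2)(1-|\tau_2|^2)\bigl[4t(2\alpha-1+\tau_2)\tau_3+2\overline{\tau_2}\tau_3^2\bigr]$, and $\Phi$ also contains the cubic term $-2t^2(1-t^2)\tau_2^3$. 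Consequently:
\begin{itemize}
\item your $L$ is not $\pm 2t(1-2\alpha)$ but depends on $\tau_2$;
\item there is an extra $\tau_3^2$ term, so the maximum over $\tau_3$ is not $1+L^2/4$;
\item what remains in $\tau_2$ is not a quadratic, so Lemma~\ref{L2} does not apply.
\end{itemize}
Even within your own computation, $\kappa$ should be $8\gamma+2\lambda+4\alpha-6\beta-1$, and $C$ omits the $\tau_2^2$-term produced by $2\alpha c_1c_3$.

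\textbf{Step 3 is not carried out, and the fallback fails.} The entire content of the lemma is Step 3, and you only expect the hypothesis to be the right discriminant condition. Your heuristic cannot produce it. Since $\gamma$ enters $\Phi$ only through $16\gamma t^4$, the coefficient $Q$ of $t^2$ does not involve $\gamma$, whereas the hypothesis contains $(\alpha\beta-2\gamma)^2$. In any case, an expansion at $t=0$ gives no bound on all of $[0,1]$.

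The fallback fails as described. In your normalisation $\Phi=-2\bigl(w_4+2(1-2\alpha)w_1w_3+(1-2\lambda)w_2^2+\cdots\bigr)$. Suppose, as your ``two-variable'' reduction indicates, $|w_3|$ and $|w_4|$ are bounded separately by Carlson's inequalities and combined by the triangle inequality. Then at $w_2=0$, $|w_1|=x$ the bound is $1-x^2+2|1-2\alpha|x(1-x^2)+O(x^4)$. This exceeds $1$ for small $x>0$ whenever $\alpha\neq\frac12$, e.g.\ $\alpha=\frac{7}{16}$ as in Theorem~\ref{T1}.

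The two Carlson bounds are not attained simultaneously. At $\tau_1=x$, $\tau_2=0$ one has $w_3=(1-x^2)\tau_3$ and $w_4=(1-x^2)(1-|\tau_3|^2)\tau_4$. Only this coupling gives $|w_4+2(1-2\alpha)w_1w_3|\le(1-x^2)\bigl(1+(1-2\alpha)^2x^2\bigr)\le 1$.

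A proof must keep $\tau_3,\tau_4$ (equivalently $w_3,w_4$) coupled and use the correct parametrisation of $c_4$. It must then actually show that the maximum over the remaining parameters is at most $2$ under the stated inequality; none of this is in the proposal. For comparison, the paper gives no proof of this lemma; it simply imports it from \cite{Ravichandran2015}.
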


\begin{lem}\label{L6}\cite{SimThomas2020}
Let $J, K,$ and $L$ be numbers such that $J \geq 0$, $K \in \mathbb{C}$, and $L \in \mathbb{R}$. 
Let $p \in \mathcal{P}$ be of the form (\ref{p1}) and define a function by

\[
\Phi(c_1,c_2) = \big| K c_1^2 + L c_2 \big| - \big| J c_1 \big|.
\]
Then 
\[
\Phi(c_{1}, c_{2}) \le 
\begin{cases}
|4K + 2L| - 2J, & \text{if } |2K + L| \geq |L| + J, \\[6pt]
2|L|, & \text{otherwise.}
\end{cases}
\]
and
\[
-\Phi(c_1,c_2) \leq 
\begin{cases}
2J - M, & \text{when } J \geq M + 2|L|, \\[6pt]
2J \sqrt{\dfrac{2|L|}{M + 2|L|}}, & \text{when } J^2 \leq 2|L|(M + 2|L|), \\[10pt]
2|L| +\dfrac{ J^2}{M + 2|L|}, & \text{otherwise}
\end{cases}
\]
where $M=|4K+2L|$.
\end{lem}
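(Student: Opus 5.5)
The plan is to reduce both inequalities to a one-variable maximization over $|c_1|$ via Lemma~\ref{L1}, after normalizing the rotation.

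\textbf{Reduction.} The functional $\Phi$ is rotation invariant. Replacing $p(z)$ by $p(e^{i\theta}z)$ sends $c_1\mapsto e^{i\theta}c_1$ and $c_2\mapsto e^{2i\theta}c_2$. Then $|Kc_1^2+Lc_2|$ and $|Jc_1|$ are unchanged, so we may take $c_1\ge 0$. By Lemma~\ref{L1} we then write $c_1=2t$ with $t=\tau_1\in[0,1]$ and $c_2=2t^2+2(1-t^2)\tau_2$ with $\tau_2\in\overline{\mathbb D}$. This gives
\[
Kc_1^2+Lc_2=(4K+2L)t^2+2L(1-t^2)\tau_2 .
\]
Writing $M=|4K+2L|$, everything reduces to bounding $|Mt^2e^{i\phi}+2L(1-t^2)\tau_2|$ from above and from below, uniformly in $\tau_2\in\overline{\mathbb D}$.

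\textbf{Upper bound for $\Phi$.} By the triangle inequality,
\[
\Phi\le g(t):=(M-2|L|)t^2+2|L|-2Jt .
\]
If $M\ge 2|L|$, then $g$ is convex, so $\max_{[0,1]}g=\max\{g(0),g(1)\}=\max\{2|L|,\,M-2J\}$. If $M<2|L|$, then $g$ is concave with vertex at a nonpositive $t$, so $g$ decreases on $[0,1]$ and the maximum is $g(0)=2|L|$. In both cases the maximum is $M-2J$ exactly when $M-2J\ge 2|L|$, that is, when $|2K+L|\ge |L|+J$. Otherwise it is $2|L|$, which gives the first assertion.

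\textbf{Lower bound (upper bound for $-\Phi$).} Here we need the minimum over $\tau_2\in\overline{\mathbb D}$ of $|Mt^2e^{i\phi}+2L(1-t^2)\tau_2|$. Since $\tau_2$ ranges over the whole closed disk, this minimum equals $\max\{0,\;Mt^2-2|L|(1-t^2)\}$. Hence
\[
-\Phi\le h(t):=2Jt-\max\{0,(M+2|L|)t^2-2|L|\}.
\]
Set $t_0=\sqrt{2|L|/(M+2|L|)}$. On $[0,t_0]$ we have $h(t)=2Jt$, which increases to $2Jt_0$. On $[t_0,1]$, $h$ is the concave quadratic $2Jt-(M+2|L|)t^2+2|L|$ with vertex $t^*=J/(M+2|L|)$. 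There are three cases:
\begin{itemize}
\item If $t^*\ge 1$, i.e. $J\ge M+2|L|$, the maximum is $h(1)=2J-M$.
\item If $t^*\le t_0$, i.e. $J^2\le 2|L|(M+2|L|)$, the maximum is $2Jt_0=2J\sqrt{2|L|/(M+2|L|)}$.
\item Otherwise the maximum is the vertex value $2|L|+J^2/(M+2|L|)$.
\end{itemize}
These are exactly the three cases in the statement.

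\textbf{Main obstacle.} The delicate step is the exact minimization over $\tau_2$ in the lower bound. The modulus can be driven to zero when $Mt^2\le 2|L|(1-t^2)$, which requires choosing $\tau_2$ inside the disk rather than on the circle. Beyond that, the work is checking that the case thresholds match the boundary comparisons of the piecewise function $h$. Sharpness would follow by taking $\tau_2$ with the appropriate argument and modulus at the maximizing $t$, using the explicit extremal $p$ from Lemma~\ref{L1}.
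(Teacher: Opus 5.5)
The paper does not prove this lemma; it quotes it from Sim and Thomas and uses it as a black box in Theorems~\ref{T3} and~\ref{T4}, so there is no internal proof to compare against. Your argument is a correct, self-contained proof of both inequalities.

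\textbf{Upper bound.} After normalizing $c_1=2t\ge 0$ you write $Kc_1^2+Lc_2=(4K+2L)t^2+2L(1-t^2)\tau_2$. The triangle inequality then gives $\Phi\le (M-2|L|)t^2-2Jt+2|L|$. Its maximum on $[0,1]$ is $\max\{2|L|,\,M-2J\}$ in both the convex and the concave case. Since $M=2|2K+L|$, the condition $M-2J\ge 2|L|$ is exactly $|2K+L|\ge |L|+J$.

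\textbf{Lower bound.} For $-\Phi$ you use the truncated function $h(t)=2Jt-\max\{0,(M+2|L|)t^2-2|L|\}$. Comparing the vertex $t^*=J/(M+2|L|)$ with $t_0$ and with $1$ reproduces the three cases of the statement exactly. Your proof also shows where the middle case $2J\sqrt{2|L|/(M+2|L|)}$ comes from: it arises solely from the truncation at $0$. Without the truncation you would only get the weaker bound $2|L|+J^2/(M+2|L|)$ in that range.

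\textbf{Three small remarks.}

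First, the inequalities only need $|(4K+2L)t^2+2L(1-t^2)\tau_2|\ge\max\{0,(M+2|L|)t^2-2|L|\}$, which is just the reverse triangle inequality. Your claim that this becomes an \emph{equality} after minimizing over $\tau_2$ needs every $\tau_2\in\overline{\mathbb{D}}$ to be realized by some $p\in\mathcal{P}$, e.g.\ via the last part of Lemma~\ref{L1} with $\tau_3\in\mathbb{T}$. That matters only for sharpness, which this version of the statement does not assert.

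Second, you should split off the degenerate case $M+2|L|=0$, i.e.\ $K=L=0$, where $t_0$ and $t^*$ are undefined. There $-\Phi=J|c_1|\le 2J=2J-M$, which is the first case.

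Third, your argument never uses $L\in\mathbb{R}$. The phase of $L$ is absorbed by $\tau_2$, so the same proof gives the lemma for complex $L$ as well.
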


\section{{\bf Main theorems and their proofs}}

\subsection{{\bf Sharp bounds for the logarithmic coefficients of the class $\mathcal{P}^{*}$}} 

\begin{theo}\label{T1} 
Let $f \in \mathcal{P}^{*}$ and $\gamma_n \ (n = 1, 2, 3,4)$ be defined by (\ref{log1}). Then 
\[
|\gamma_n| \leq \frac{1}{n+1}, \qquad n = 1, 2, 3,4.
\]
The inequalities are sharp.
\end{theo}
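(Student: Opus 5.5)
We would follow the standard route: represent $f'$ through a Schwarz function, convert to a Carathéodory function, and reduce each $\gamma_n$ to one of the coefficient functionals controlled by Lemmas \ref{L3}, \ref{L4} and \ref{L5}.

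\emph{Reduction to $\mathcal{P}$.} Put $\varphi(z)=e^z(1+\sin z)$. Expanding,
\[
\varphi(z)=1+2z+\tfrac32 z^2+\tfrac12 z^3+\tfrac1{24}z^4+\cdots .
\]
For $f\in\mathcal{P}^{*}$ we write $f'(z)=\varphi(\omega(z))$ with $\omega\in\Omega$. We then set $p=(1+\omega)/(1-\omega)\in\mathcal{P}$, so that $\omega=(p-1)/(p+1)$, and expand $\omega$ in powers of $z$ in terms of $c_1,\dots,c_4$. Comparing coefficients in $f'(z)=\varphi(\omega(z))$, using $n a_n=[z^{n-1}]\,\varphi(\omega(z))$, gives $a_2,\dots,a_5$ as polynomials in $c_1,\dots,c_4$. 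For example, $a_2=c_1/2$.

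\emph{The four estimates.} Substituting these expressions into (\ref{log1}) gives $\gamma_1=c_1/4$, so $|\gamma_1|\le 1/2$ follows from $|c_1|\le2$. For $\gamma_2$ the substitution produces an expression $\tfrac{1}{6}\,|c_2-vc_1^2|$ for an explicit constant $v$. We expect $v\in[0,1]$, in which case Lemma \ref{L3} gives $|\gamma_2|\le \tfrac{2}{6}=\tfrac13$. For $\gamma_3$ we will write $\gamma_3=\tfrac18\,(c_3-2Bc_1c_2+Dc_1^3)$ with explicit $B,D$. We then check $0\le B\le1$ and $B(2B-1)\le D\le B$, so that Lemma \ref{L4} yields $|\gamma_3|\le \tfrac{2}{8}=\tfrac14$. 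For $\gamma_4$ we will write $\gamma_4=-\tfrac1{10}\,(\gamma c_1^4+\lambda c_2^2+2\alpha c_1c_3-\tfrac32\beta c_1^2c_2-c_4)$ and identify $\alpha,\beta,\gamma,\lambda$. The conclusion $|\gamma_4|\le\tfrac15$ then comes from Lemma \ref{L5}.

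\emph{Main obstacle.} The hard step is $\gamma_4$. The computation of $a_5$ in terms of $c_1,\dots,c_4$ is long. More importantly, the quadratic inequality in Lemma \ref{L5} must actually hold for the resulting parameters, with $0<\alpha<1$ and $0<\lambda<1$. We would evaluate it numerically with exact rationals. If it failed, the fallback would be to use Lemma \ref{L1}: parametrize $c_1,\dots,c_4$ by $\tau_1\in[0,1]$ and $\tau_2,\tau_3,\tau_4\in\ol{\mathbb D}$, bound the $\tau_4$ and $\tau_3$ terms by the triangle inequality, and maximize the remaining expression in $\tau_2$ via Lemma \ref{L2}, then in $\tau_1$. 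The same fallback applies if the parameter conditions in Lemma \ref{L3} or Lemma \ref{L4} turn out to fail.

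\emph{Sharpness.} For each $n$ take $f_n$ with $f_n'(z)=\varphi(z^n)$, which corresponds to $\omega(z)=z^n$. Then $a_{n+1}=2/(n+1)$ and $a_k=0$ for $2\le k\le n$. Every term in (\ref{log1}) except $\tfrac12 a_{n+1}$ then vanishes, so $\gamma_n=1/(n+1)$. This shows all four bounds are attained.
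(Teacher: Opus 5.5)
Your proposal is correct and takes essentially the same route as the paper: carrying out your computation gives $v=\tfrac12$ for Lemma~\ref{L3}, $B=\tfrac{11}{24}$ and $D=\tfrac{3}{16}$ for Lemma~\ref{L4}, and $(\alpha,\beta,\gamma,\lambda)=(\tfrac{7}{16},\tfrac{131}{432},\tfrac{35}{1152},\tfrac{29}{72})$ for Lemma~\ref{L5}, with the same extremal functions $f_n'(z)=e^{z^n}(1+\sin z^n)$, so none of your fallbacks are needed. Your plan to verify the Lemma~\ref{L5} hypothesis in exact rationals is worth doing: the left side is about $0.0185$ and the right side is $4\alpha^2(1-\alpha)^2\lambda(1-\lambda)=61103/2^{20}\approx 0.0583$, whereas the rational values the paper displays for this step are wrong as printed (the inequality itself does hold).
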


\begin{proof} 
Since $f \in \mathcal{P}^{*}$ then there exists a Schwarz function $w(z)$ with 
$w(0) = 0$ and $|w(z)| < 1$ in $\mathbb{D}$ such that  
\begin{equation}\label{t1.1}
f'(z) = e^{w(z)}\big(1 + \sin w(z)\big).
\end{equation}

If $p \in \mathcal{P}$, then we can write
\begin{equation}\label{t1.2}
w(z) = \frac{p(z)-1}{p(z)+1}.
\end{equation}

Let $p$ be given by (\ref{p1}). From \eqref{t1.1} and \eqref{t1.2}, by equating the coefficients we obtain
\begin{eqnarray}\label{a2}
\left.
\begin{aligned}
a_2 &= \frac{1}{2} c_1,\\[6pt]
\label{a3}a_3 &= \frac{1}{3} c_2 - \frac{c_1^2}{24},\\[6pt]
\label{a4}a_4 &= -\frac{c_1c_2}{16}-\frac{c_1^3}{64}+\frac{c_3}{4},\\[6pt]
\label{a5}a_5 &= -\frac{3c_1^2c_2}{80} -\frac{c_1c_3}{20}
               -\frac{c_2^2}{40}+\frac{c_4}{5}
               +\frac{5c_1^4}{384}
\end{aligned}
\right\}
\end{eqnarray}

\begin{enumerate}
  \item[(i)] {\bf Sharp bound of $\gamma_1$:} By using (\ref{log1}) and (\ref{a2}) we have 
 \[|\gamma_1|=\left|\frac{1}{2}a_2\right|=\frac{1}{4}|c_1|\leq \frac{1}{2}.\]
 The inequality is sharp for the function $f_1$, which is defined by 
\begin{eqnarray}\label{f1} 
f_1(z)
= \int_{0}^{z} (1+\sin t)\, e^{t}\, dt
= z + z^{2} + \frac{1}{2} z^{3}
+ \frac{1}{8} z^{4}
+ \frac{1}{120} z^{5}
+ \cdots .
\end{eqnarray}
For this function, $c_1=2$, $c_2=2$, $c_3=2$, and $|\gamma_1|=1/2$.

\item[(ii)] {\bf Sharp bound of $\gamma_2$:} From (\ref{log1}), (\ref{a2}) and (\ref{a3}), we get 
\begin{eqnarray*} |\gamma_2|&=&\left| \frac{1}{2}\left(a_3 - \frac{1}{2}a_2^2\right)\right|\\
&=& \frac{1}{2}\left|\left(\frac{c_2}{3} -\frac{c_1^2}{24}\right)-\frac{1}{2}\frac{c_1^2}{4}\right|\\
&=& \frac{1}{6}\left|c_2-\frac{1}{2}c_1^2\right|\\
&=&\frac{1}{6}|c_2-vc_1^2|,\end{eqnarray*}
where $v=\frac{1}{2}$. Therefore applying Lemma \ref{L3}, we deduce that 
\[|\gamma_2|\leq \frac{1}{6}\cdot 2=\frac{1}{3}.\]

The inequality is sharp for the function $f_2$, which is defined by 
\begin{eqnarray}\label{f2} 
f_2(z)
= \int_{0}^{z} (1+\sin t^2)\, e^{t^2}\, dt
= z + \frac{2}{3}z^{3} + \frac{3}{10}z^{5}
+ \frac{1}{14} z^{7}
+ \frac{1}{216} z^{9}
+ \cdots .
\end{eqnarray}
For this function, $c_1=0$, $c_2=2$, and $|\gamma_2|=1/3$.

\item[(iii)] {\bf Sharp bound of $\gamma_3$:} From (\ref{log1}), (\ref{a2})-(\ref{a4}), we have
\begin{eqnarray*} |\gamma_3|&=&\left| \frac{1}{2}\left(a_4 - a_2a_3 + \frac{1}{3}a_2^3\right)\right|\\
                &=& \frac{1}{8} \left|c_3-\frac{11}{12}c_1c_2+\frac{3}{16}c_1^3\right|\\
&=& \frac{1}{8}|c_3-2Bc_1c_2+Dc_1^3|,\end{eqnarray*}
where $B=\frac{11}{24}$ and $D=\frac{3}{16}$. Therefore it is clear that $0\leq B\leq 1$ and the inequality $B(2B-1)\leq D\leq B$ holds. Indeed,
\[
B(2B-1)=\frac{11}{24}\left(\frac{22}{24}-1\right)=\frac{11}{24}\left(-\frac{2}{24}\right)=-\frac{11}{288}\leq \frac{3}{16},
\]
and $\frac{3}{16}\leq \frac{11}{24}$.

Now applying Lemma \ref{L4} we deduce that 
\[ |\gamma_3|\leq \frac{1}{8}\cdot 2=\frac{1}{4}.\]

The inequality is sharp for the function $f_3$, which is defined by 
\begin{eqnarray}\label{f3} 
f_3(z)
= \int_{0}^{z} (1+\sin t^3)\, e^{t^3}\, dt
= z + \frac{1}{2}z^{4} + \frac{3}{14}z^{7}
+ \cdots .
\end{eqnarray}
For this function, $c_1=0$, $c_2=0$, $c_3=2$, and $|\gamma_3|=1/4$.

\item[(iv)] {\bf Sharp bound of $\gamma_4$:} From (\ref{log1}), (\ref{a2})-(\ref{a5}), we have
\begin{eqnarray}\label{G4} 
\begin{aligned}
|\gamma_4|
&=\left| \frac{1}{2}\left(a_5 - a_2 a_4 + a_2^2 a_3 - \frac{1}{2} a_3^2 - \frac{1}{4} a_2^4\right)\right| \\[6pt]
&=\left|-\frac{7}{2304}c_1^4+\frac{131}{2880}c_1^2c_2-\frac{7}{80}c_1c_3-\frac{29}{720}c_2^2+\frac{1}{10}c_4\right| \\[6pt]
&=\frac{1}{10}\left| \frac{35}{1152}c_1^4+\frac{29}{72}c_2^2+\frac{7}{8}c_1c_3-\frac{131}{288}c_1^2c_2-c_4\right| \\[6pt]
&=\frac{1}{10}\left| \gamma\, c_1^4+\lambda\, c_2^2+2\alpha\, c_1 c_3-\frac{3}{2}\beta\, c_1^2 c_2-c_4 \right|,
\end{aligned} \end{eqnarray}
where
\[
\alpha=\frac{7}{16},\qquad \lambda=\frac{29}{72},\qquad \beta=\frac{131}{432},\qquad \gamma=\frac{35}{1152}.
\]

A direct calculation gives
\begin{align*}
&8\lambda(1 - \lambda) \Big\{ (\alpha\beta - 2\gamma)^2 + (\alpha(\lambda + \alpha) - \beta)^2 \Big\} + \alpha(1 - \alpha)(\beta - 2\lambda \alpha)^2 \\
&= \frac{1}{2^{16} \cdot 3^8 \cdot 5 \cdot 7} \cdot 1693945 \leq \frac{1}{2^{12} \cdot 3^4 \cdot 5 \cdot 7} = 4\alpha^2(1-\alpha)^2\lambda (1-\lambda),
\end{align*}
which can be verified by direct simplification.

Thus the condition of Lemma \ref{L5} holds. Therefore applying Lemma \ref{L5} to (\ref{G4}) we deduce that
\[ |\gamma_4|\leq \frac{1}{10}\cdot 2=\frac{1}{5}.\]

The inequality is sharp for the function $f_4$, which is defined by 
\begin{eqnarray}\label{f4} 
f_4(z)
= \int_{0}^{z} (1+\sin t^4)\, e^{t^4}\, dt
= z + \frac{2}{5}z^{5} + \frac{1}{6}z^{9}
+ \frac{1}{26} z^{13}
+ \frac{1}{408} z^{17}
+ \cdots .
\end{eqnarray}
For this function, $c_1=c_2=c_3=0$, $c_4=2$, and $|\gamma_4|=1/5$.
 \end{enumerate}
 
\end{proof}

\subsection{{\bf Sharp bounds for the logarithmic inverse coefficients of the class $\mathcal{P}^{*}$}} 

\begin{theo}\label{T2} Let \( f \in \mathcal{P}^{*} \), and let the coefficients \( \Gamma_n \) \((n=1,2,3)\) be defined by (\ref{IG1}). Then
\[
|\Gamma_n| \le \frac{1}{2}, \qquad n=1,2,
\]
and
\[
|\Gamma_3| \le \frac{35}{48}.
\]
The inequalities are sharp.
\end{theo}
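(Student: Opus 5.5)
We compute $\Gamma_2$ and $\Gamma_3$ in terms of the Carath\'eodory coefficients using (\ref{IG1}) and (\ref{a2}), exactly as in the proof of Theorem \ref{T1}. First, $|\Gamma_1|=\frac14|c_1|\le \frac12$, with equality for $f_1$ in (\ref{f1}). Next, a direct substitution gives
\[
\Gamma_2=-\frac16\Bigl(c_2-\frac54 c_1^2\Bigr).
\]
Lemma \ref{L3} applies with $v=\frac54>1$, so $|c_2-\frac54 c_1^2|\le 4v-2=3$ and hence $|\Gamma_2|\le \frac12$. Equality in Lemma \ref{L3} for $v>1$ occurs for $p(z)=(1+z)/(1-z)$, i.e.\ $c_1=c_2=2$, which corresponds to $f_1$. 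Indeed, for $f_1$ we get $\Gamma_2=-\frac16(2-5)=\frac12$.

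For $\Gamma_3$, substituting $a_2,a_3,a_4$ gives
\[
\Gamma_3=-\frac18\Bigl(c_3-\frac{35}{12}c_1c_2+\frac{31}{16}c_1^3\Bigr).
\]
Here $B=\frac{35}{24}>1$, so Lemma \ref{L4} is not available; this is the main obstacle. At $c_1=c_2=c_3=2$ the bracket equals $\frac{35}{6}$, so $f_1$ gives $|\Gamma_3|=\frac{35}{48}$, and this is the value to be proved as the maximum. I will use the parametrization of Lemma \ref{L1} with $\tau_1=t\in[0,1]$. The bracket then becomes
\[
\frac{35}{6}t^3-\frac{23}{3}t(1-t^2)\tau_2-2t(1-t^2)\tau_2^2+2(1-t^2)(1-|\tau_2|^2)\tau_3 .
\]
The cases $t=0$ and $t=1$ are immediate: at $t=0$ the bracket has modulus at most $2$, and at $t=1$ it equals $\frac{35}{6}$. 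For $t\in(0,1)$ I will apply the triangle inequality only to the $\tau_3$-term, using $|\tau_3|\le1$. Dividing by $2(1-t^2)$, this bounds the modulus by $2(1-t^2)\,Y(A,B,C)$ with
\[
A=\frac{35t^3}{12(1-t^2)},\qquad B=-\frac{23}{6}t,\qquad C=-t .
\]

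It is essential not to separate the $t^3$ term from the $\tau_2$-terms. A crude bound $\frac{35}{6}t^3+2(1-t^2)\max_{x\in[0,1]}\{\frac{29}{6}tx+1-x^2\}$ exceeds $\frac{35}{6}$ near $t\approx0.92$, so the phase interaction must be kept. Since $AC<0$, we are in case (ii) of Lemma \ref{L2}. For each subrange of $t$, I will identify which branch applies by comparing $B^2$ with $-4AC(C^{-2}-1)=\frac{35}{3}t^2(1-t^2)/(1-t^2)\cdot\frac{1}{t^2}\cdot t^2$ (to be simplified) and with $4(1\pm|C|)^2$. In the $R(A,B,C)$ branch, I will decide among the three sub-branches by comparing $|C|(|B|+4|A|)$ with $|AB|$.

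On each resulting $t$-interval, I will show that $2(1-t^2)Y(A,B,C)\le \frac{35}{6}$ by one-variable calculus, typically by reducing the claim to a polynomial inequality in $t$ on that interval. I expect the dominant branch near $t=1$ to be $|A|+|B|-|C|$, which yields $\frac{35}{6}t^3+\frac{17}{3}t(1-t^2)$. Its derivative in $t$ is positive on $[0,1]$, so it is increasing and is maximized at $t=1$ with value $\frac{35}{6}$. The remaining branches should be handled by checking endpoint values and critical points, and carrying out this verification across all the branches is the main technical labour. Sharpness of $|\Gamma_3|\le\frac{35}{48}$ follows from $f_1$.
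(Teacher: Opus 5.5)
Your route is the paper's route: the same formulas for $\Gamma_2$ and $\Gamma_3$, and Lemma \ref{L3} with $v=\frac54$ (this part, and sharpness via $f_1$, are fine). For $\Gamma_3$ you use the same Lemma \ref{L1} parametrization with the $\tau_3$-term split off, giving $|\Gamma_3|\le\frac{1-t^2}{4}\,Y(A,B,C)$ with exactly your $A,B,C$, and case (ii) of Lemma \ref{L2}. Your remark that the crude bound overshoots $\frac{35}{6}$ near $t\approx 0.92$ is also correct.

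The gap is the step near $t=1$. It is the only part of the branch analysis you commit to, and it is wrong. The sub-branch $|A|+|B|-|C|$ of $R(A,B,C)$ requires $|C|(|B|+4|A|)\le|AB|$. But $|B|=\frac{23}{6}|C|<4|C|$, so $|AB|<4|A||C|<|C|(|B|+4|A|)$ for every $t\in(0,1)$, and this branch never occurs. Moreover, $|A|+|B|-|C|=|A-B+C|$ is the value of $|A+B\tau_2+C\tau_2^2|+1-|\tau_2|^2$ at $\tau_2=-1$. It is therefore a \emph{lower} bound for $Y$, not an upper bound. The monotonicity of $\frac{35}{6}t^3+\frac{17}{3}t(1-t^2)$ thus says nothing about $|\Gamma_3|$; for instance, at $t=0.9$ one has $Y\approx 13.76$ while $|A|+|B|-|C|\approx 13.74$.

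The correct branch map is as follows.
\begin{itemize}
\item Since $B^2=\frac{529}{36}t^2>\frac{35}{3}t^2=-4AC(C^{-2}-1)$, the second branch of (ii) is never used.
\item The first branch of (ii) holds for $t<\frac{12}{35}$.
\item The sub-branch $-|A|+|B|+|C|$ holds for $\frac{12}{35}\le t\le\sqrt{276/1921}$. Together with the first branch, it gives $|\Gamma_3|<0.36$.
\item For $t>\sqrt{276/1921}$, in particular near $t=1$, the ``otherwise'' sub-branch $(|C|+|A|)\sqrt{1-B^2/(4AC)}$ applies.
\end{itemize}
In that last range, $1-\frac{B^2}{4AC}=\frac{529-109t^2}{420t^2}$ and $|A|+|C|=\frac{t(12+23t^2)}{12(1-t^2)}$. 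Hence $\frac{1-t^2}{4}Y=\frac{(12+23t^2)\sqrt{529-109t^2}}{48\sqrt{420}}$. With $u=t^2$, the derivative of $(12+23u)\sqrt{529-109u}$ has numerator $23026-7521u>0$ on $[0,1]$. So this quantity increases to $\frac{35}{48}$ at $t=1$. The paper carries out exactly this computation, and your proof needs it in place of the $|A|+|B|-|C|$ step. Until the branches are identified and checked this way, the bound $|\Gamma_3|\le\frac{35}{48}$ is not established.
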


\begin{proof} Let \( f \in \mathcal{P}^{*} \).

\begin{enumerate}

\item[(i)] \textbf{Sharp bound of $\Gamma_1$.}
Using \eqref{IG1} and \eqref{a2}, we obtain
\[
|\Gamma_1|
= \left|-\frac{1}{2}a_2\right|
= \frac{1}{4}|c_1|
\le \frac{1}{2}.
\]
The inequality is sharp for the function $f_1$ defined in~\eqref{f1}.

\item[(ii)] {\bf Sharp bound of $\Gamma_2$:} From \eqref{IG1} and \eqref{a2}, we obtain
\begin{eqnarray*}
|\Gamma_2|
&=&\left|-\frac{1}{2}a_3+\frac{3}{4}a_2^2\right| \\
&=&\left|-\frac{1}{6}c_2+\frac{1}{48}c_1^2+\frac{9}{48}c_1^2\right| \\
&=&\frac{1}{6}\left|c_2-\frac{5}{4}c_1^2\right| \\
&=&\frac{1}{6}\left|c_2-vc_1^2\right|,
\end{eqnarray*}
where \(v=\frac{5}{4}>1\). Therefore, by applying Lemma~\ref{L3}, we obtain
\[
|\Gamma_2|\leq \frac{1}{6}(4v-2)=\frac{1}{6}(5-2)=\frac{1}{2}.
\]
The inequality is sharp, and equality is attained by the function \(f_1\), defined in \eqref{f1}. Indeed, for $f_1$, $c_1=2$ and $c_2=2$, so $\left|c_2-\frac{5}{4}c_1^2\right|=|2-5|=3$, yielding $|\Gamma_2|=1/2$.

\item[(iii)] {\bf Sharp bound of $\Gamma_3$:} From \eqref{IG1} and \eqref{a2}, we have
\begin{eqnarray}
\label{G3}
|\Gamma_3|
&=&
\left|-\frac{1}{2}\left(a_4-4a_2a_3+\frac{10}{3}a_2^3\right)\right| \nonumber\\
&=&
\left|
\frac{c_1c_2}{32}
+\frac{c_1^3}{128}
-\frac{c_3}{8}
+\frac{c_1c_2}{3}
-\frac{c_1^3}{24}
-\frac{5}{24}c_1^3
\right| \nonumber\\
&=&
\frac{1}{8}
\left|
c_3+\frac{31}{16}c_1^3-\frac{35}{12}c_1c_2
\right|.
\end{eqnarray}

Now, substituting the expressions for \(c_1\), \(c_2\), and \(c_3\) from \eqref{c1}--\eqref{c3} into \eqref{G3}, we obtain
\begin{eqnarray}
\label{G11}
|\Gamma_3|
&=&
\frac{1}{8}
\left|
\frac{35}{6}\tau_1^3
+
(1-\tau_1^2)
\left(
-\frac{23}{3}\tau_1\tau_2
-2\tau_1\tau_2^2
+2\tau_3(1-|\tau_2|^2)
\right)
\right|.
\end{eqnarray}

Note that for
\[
f_\theta(z):=e^{-i\theta}f(e^{i\theta}z), \qquad \theta\in\mathbb{R},
\]
where \(f\in\mathcal{S}\), we have
\[
\Gamma_3(f_\theta)=e^{3i\theta}\Gamma_3(f).
\]
Thus, the quantity \(|\Gamma_3|\) is invariant under rotations of \(f\). Moreover, since \(p\in\mathcal{P}\) is a Carath\'{e}odory function, we have
\[
|c_n|\leq 2,\qquad n=1,2,\ldots.
\]
Since the class \(\mathcal{P}\) is invariant under rotations, we may assume that
\[
c_1\in[0,2].
\]
Consequently,
\[
\tau_1\in[0,1].
\]

We now consider the following three cases according to the value of \(\tau_1\).

{\bf Case 1.} Let $\tau_1=1$. Then from (\ref{G11}) we easily obtain
\[|\Gamma_3|=\frac{35}{48}.\]

{\bf Case 2.} Let $0\leq \tau_1<1$. Applying the triangle inequality in (\ref{G11}) and using the fact that $|\tau_1|\leq 1$, we obtain
\begin{eqnarray}\label{G12} |\Gamma_3| &\leq & \frac{(1 - \tau_1^2)}{4}\left(\left|\frac{35}{12}\frac{\tau_1^3}{(1 - \tau_1^2)}-\frac{23}{6}\tau_1\tau_2 - \tau_1\tau_2^2\right| +(1 -|\tau_2|^2)\right)\nonumber\\
&\leq&\frac{(1 - \tau_1^2)}{4} (|A+B\tau_2+C\tau_2^2|+1-|\tau_2|^2),\end{eqnarray}
where $A=\frac{35\tau_1^3}{12(1 - \tau_1^2)}$, $B=-\frac{23}{6}\tau_1$ and $C=-\tau_1$.

Observe that $AC<0$. Hence we can apply case (ii) of Lemma \ref{L2}. Next, we check all the conditions of case (ii).

\begin{enumerate}
\item[(a)] A simple computation shows that the inequality
\[ \frac{529}{36} \tau_1^2=B^2 < 4(1+|C|)^2 \] is true for all $\tau_1\in [0,1).$
Note that $\frac{529}{36}\tau_1^2=B^2 < -4AC(C^{-2}-1)=\frac{35}{3}\tau_1^2$, is false for $\tau_1\in [0,1)$ since $529/36 > 35/3$. Therefore the condition for the second sub-case of (ii) is: $$B^2 < \min\left\{ 4(1 + |C|)^2, \frac{35}{3}\tau_1^2 \right\}.$$ Because $B^2$ is actually greater than $\frac{35}{3}\tau_1^2$, the inequality fails. Therefore, this specific sub-case does not hold for our problem.

\item[(b)]
Here,
\[
|B|-2(1-|C|)=35\tau_1-12<0
\]
holds for
\[
\tau_1<\frac{12}{35}\approx 0.342857.
\]
Moreover,
\[
-4AC(C^{-2}-1)-B^2
=-\frac{109}{36}\tau_1^2\leq 0
\]
for \(0\leq\tau_1<1\).

Applying Lemma~\ref{L2} to \eqref{G12}, we obtain
\begin{eqnarray*}
|\Gamma_3|
&\leq&
\frac{1-\tau_1^2}{4}
\left(
1-|A|+\frac{B^2}{4(1-|C|)}
\right) \\
&=&
\frac{1-\tau_1^2}{4}\,Y(\tau_1),
\end{eqnarray*}
where
\[
Y(\tau_1)
=
1+\frac{385\tau_1^2+109\tau_1^3}{144(1-\tau_1^2)}.
\]

Furthermore,
\[
Y'(\tau_1)
=
\frac{\tau_1\left(770+327\tau_1-1097\tau_1^3\right)}
{144(1-\tau_1^2)^2}.
\]
Since the numerator is positive on the interval
\(\left(0,\frac{12}{35}\right)\),
it follows that \(Y\) is increasing there. Hence,
\[
\max_{\,0\leq\tau_1\leq 12/35}Y(\tau_1)
=
Y\!\left(\frac{12}{35}\right)
=
1+\frac{385(144/1225)+109(1728/42875)}{144(1081/1225)}
=
\frac{10031}{7218}
\approx
1.3897.
\]
Consequently,
\[
|\Gamma_3|
\leq
0.3065.
\]

To estimate the upper bound of \(|\Gamma_3|\) for
\(\tau_1\in\left(\frac{12}{35},1\right)\),
we next examine the quantity \(R(A,B,C)\) given in Lemma~\ref{L2}.

\item[(c)]
Next, observe that the inequality
\[
|C|(|B|+4|A|)\leq |AB|
\]
is equivalent to
\[
\tau_1\left(\frac{23}{6}\tau_1+\frac{35\tau_1^3}{3(1-\tau_1^2)}\right)
\leq
\frac{35\tau_1^3}{12(1-\tau_1^2)}\cdot\frac{23\tau_1}{6},
\]
which, after simplification, reduces to
\[
397\tau_1^2-552\geq 0.
\]
Since this inequality does not hold for any
\(\tau_1\in[0,1)\), the condition
\[
|C|(|B|+4|A|)\leq |AB|
\]
fails throughout the interval \([0,1)\).

\item[(d)]
Observe that the inequality
\[
|AB|\leq |C|(|B|-4|A|)
\]
is equivalent to
\[
\frac{805\tau_1^4}{72(\tau_1^2-1)}
\leq
\tau_1\left(\frac{23}{6}\tau_1-\frac{35\tau_1^3}{3(1-\tau_1^2)}\right),
\]
which, after simplification, reduces to
\[
1921\tau_1^2-276\leq 0.
\]
This inequality holds for
\[
\tau_1\leq \sqrt{\frac{276}{1921}}\approx 0.37904.
\]

Applying Lemma~\ref{L2} to \eqref{G12}, we obtain
\[
-|A|+|B|+|C|
=
\frac{29}{6}\tau_1
-\frac{35\tau_1^3}{12(1-\tau_1^2)}
=:G(\tau_1).
\]

Furthermore,
\[
G'(\tau_1)
=
\frac{29}{6}
-
\frac{35}{12}
\cdot
\frac{3\tau_1^2-\tau_1^4}{(1-\tau_1^2)^2}
>0,
\qquad
\forall\,\tau_1\in\left(\frac{12}{35},\sqrt{\frac{276}{1921}}\right).
\]
Hence, \(G\) is increasing on
\(\left(\frac{12}{35},\sqrt{\frac{276}{1921}}\right)\), and therefore
\[
\max_{\tau_1\in\left(\frac{12}{35},\,\sqrt{\frac{276}{1921}}\right)}
G(\tau_1)
=
G\!\left(\sqrt{\frac{276}{1921}}\right)
\approx
1.6464.
\]
Consequently,
\[
|\Gamma_3|
\leq
0.4116.
\]

\item[(e)]
For the interval
\[
\tau_1\in\left(\sqrt{\frac{276}{1921}},\,1\right),
\]
we apply the ``otherwise'' case of \(R(A,B,C)\) given in Lemma~\ref{L2}. In this range,
\[
R(A,B,C)
=
(|C|+|A|)
\sqrt{1-\frac{B^2}{4AC}}
=:H(\tau_1),
\]
where
\[
H(\tau_1)
=
\frac{(12+23\tau_1^2)\sqrt{529-109\tau_1^2}}
{12\sqrt{420}\,(1-\tau_1^2)}.
\]

Substituting this expression into \eqref{G12}, we obtain
\[
|\Gamma_3|
=
\frac{(1-\tau_1^2)}{4}H(\tau_1)
=
\frac{(12+23\tau_1^2)\sqrt{529-109\tau_1^2}}
{48\sqrt{420}}.
\]

To determine the maximum value of \(|\Gamma_3|\), let
\[
u=\tau_1^2,
\]
so that
\[
u\in\left(\frac{276}{1921},\,1\right).
\]
Define
\[
g(u)
=
(12+23u)\sqrt{529-109u}.
\]
Differentiating, we obtain
\begin{align*}
g'(u)
=\frac{23026-7521u}
{2\sqrt{529-109u}}.
\end{align*}

The critical point satisfies
\[
23026-7521u=0,
\]
that is,
\[
u=\frac{23026}{7521}\approx 3.06.
\]
Since this critical point lies outside the interval \((0,1)\), and
\[
23026-7521u>0,
\qquad u\in(0,1),
\]
it follows that
\[
g'(u)>0,
\qquad u\in(0,1).
\]
Hence \(g\) is strictly increasing on \((0,1)\). Therefore,
\(|\Gamma_3|\) attains its maximum value as \(\tau_1\to1^{-}\). Consequently,
\[
\max|\Gamma_3|
=
\lim_{\tau_1\to1^{-}}
\frac{(12+23\tau_1^2)\sqrt{529-109\tau_1^2}}
{48\sqrt{420}}
=
\frac{35}{48}
\approx
0.72917.
\]

Hence, from the above discussion, we conclude that
\[
|\Gamma_3|
\leq
\frac{35}{48}.
\]
The inequality is sharp, and equality is attained by the function \(f_1\), defined in \eqref{f1}.
\end{enumerate}
\end{enumerate}
\end{proof}

\subsection{{ \bf Bounds for the differences of logarithmic and logarithmic inverse coefficients of the class $\mathcal{P}^{*}$}}

In 1985, de Branges~\cite{LDB1} resolved the celebrated Bieberbach conjecture by showing that for a function
$f \in \mathcal{S}$ of the form~\eqref{eqn1}, the coefficient estimate $|a_n| \leq n$ holds for all $n \geq 2$,
with equality occurring only for the Koebe function
\[
k(z) := \frac{z}{(1-z)^2}
\]
and its rotations.
This fundamental result naturally led to the question of whether the inequality
\[
\bigl||a_{n+1}| - |a_n|\bigr| \leq 1, \quad n \geq 2,
\]
is valid for all functions in $\mathcal{S}$.
This problem was first investigated by Goluzin~\cite{G1} in connection with the Bieberbach conjecture.
Later, in 1963, Hayman~\cite{H} proved that
\[
\bigl||a_{n+1}| - |a_n|\bigr| \leq A,
\]
for every $f \in \mathcal{S}$, where $A \geq 1$ is an absolute constant.
The best estimate currently known is $A = 3.61$, due to Grinspan~\cite{G2}.
However, the sharp bound is known only in the case $n=2$
(see~\cite[Theorem~3.11]{PLD1}), namely
\[
-1 \leq |a_3| - |a_2| \leq 1.029\ldots
\]

For the starlike class $\mathcal{S}^*$, Pommerenke~\cite{Pommerenke1971} conjectured that
\[
\bigl||a_{n+1}| - |a_n|\bigr| \leq 1, \quad n \geq 2,
\]
which was later proved by Leung~\cite{Leung1978}.
For convex functions, Li and Sugawa~\cite{LiSugawa} studied the sharp upper bound of
$|a_{n+1}| - |a_n|$ for $n \geq 2$ and obtained sharp lower bounds for $n=2$ and $n=3$.
Several further results in this direction are available
(see~\cite{Peng2019, Arora2019, Arora2022, Arora2023}).

In 2023, Lecko and Partyka~\cite{Lecko2023} employed the Loewner method to establish sharp upper and lower
bounds for $|\gamma_2| - |\gamma_1|$ for functions in $\mathcal{S}$.
Their approach was subsequently simplified by Obradovi\'{c} and Tuneski~\cite{Obradovic2024}.
Moreover, Kumar and Cho~\cite{Kumar2023} obtained sharp estimates for
$|\gamma_2| - |\gamma_1|$ for certain subclasses of $\mathcal{S}$.

Motivated by these developments, this section is devoted to determining sharp lower and upper bounds for
$|\gamma_2| - |\gamma_1|$ and $|\Gamma_2| - |\Gamma_1|$ for functions belonging to the class
$\mathcal{P}^{*}$.

\begin{theo}\label{T3} Let \( f \in \mathcal{P}^{*} \), where \( \gamma_n \ (n = 1, 2) \) are defined by (\ref{log1}). Then 
\[
-\frac{1}{2} \leq |\gamma_2| - |\gamma_1| \leq \frac{1}{3}.
\]
The inequalities are sharp.
\end{theo}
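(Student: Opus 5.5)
We express both coefficients in terms of the Carath\'{e}odory coefficients and then apply Lemma~\ref{L6}. From (\ref{log1}) and (\ref{a2}) we have $\gamma_1=\frac14 c_1$ and $\gamma_2=\frac16\left(c_2-\frac12 c_1^2\right)$. Hence
\[
|\gamma_2|-|\gamma_1|=\left|-\tfrac{1}{12}c_1^2+\tfrac16 c_2\right|-\left|\tfrac14 c_1\right|=\Phi(c_1,c_2),
\]
with $K=-\frac1{12}$, $L=\frac16$ and $J=\frac14$. These are admissible in Lemma~\ref{L6}, since $J\ge0$, $K\in\mathbb{C}$ and $L\in\mathbb{R}$.

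\textbf{Upper bound.} Here $2K+L=0$, so $|2K+L|=0<|L|+J$ and the ``otherwise'' branch applies. It gives $\Phi\le 2|L|=\frac13$. Equality is attained by $f_2$ from (\ref{f2}): there $c_1=0$ and $c_2=2$, so $\gamma_1=0$ and $|\gamma_2|=\frac13$.

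\textbf{Lower bound.} We have $M=|4K+2L|=|-\frac13+\frac13|=0$. The first condition $J\ge M+2|L|$ would require $\frac14\ge\frac13$, which fails. The second condition $J^2\le 2|L|(M+2|L|)$ reads $\frac1{16}\le\frac13\cdot\frac13=\frac19$, which holds. So
\[
-\Phi\le 2J\sqrt{\frac{2|L|}{M+2|L|}}=2\cdot\tfrac14\cdot 1=\tfrac12,
\]
that is, $|\gamma_2|-|\gamma_1|\ge-\frac12$. For sharpness we want $|c_1|=2$ and $c_2=\frac12c_1^2=2$. This is exactly the case $c_1=c_2=2$ of $f_1$ in (\ref{f1}), for which $|\gamma_1|=\frac12$ and $\gamma_2=\frac16(2-2)=0$.

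The only delicate step is the bookkeeping of which branch of Lemma~\ref{L6} applies, since $M=0$ here. As a cross-check independent of the lemma, the same bounds follow directly. Write $c_1=2\tau_1$ and $c_2=2\tau_1^2+2(1-\tau_1^2)\tau_2$ using Lemma~\ref{L1}. Then
\[
|\gamma_2|-|\gamma_1|=\tfrac13(1-\tau_1^2)|\tau_2|-\tfrac12\tau_1,
\]
whose maximum over $\tau_1\in[0,1]$, $|\tau_2|\le1$ is $\frac13$ (at $\tau_1=0$, $|\tau_2|=1$) and whose minimum is $-\frac12$ (at $\tau_1=1$). This confirms both inequalities and the extremal functions.
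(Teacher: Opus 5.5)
Your proof is correct and follows the paper's argument. The paper applies Lemma~\ref{L6} with $K=-1$, $L=2$, $J=3$ and an overall factor $\frac{1}{12}$; since the lemma is homogeneous in $(K,L,J)$, this is exactly your choice $K=-\frac{1}{12}$, $L=\frac16$, $J=\frac14$. It uses the same branches ($|2K+L|<|L|+J$, and $J^2\le 2|L|(M+2|L|)$ with $M=0$) and the same extremal functions $f_2$ and $f_1$, while your direct check via Lemma~\ref{L1}, reducing the functional to $\frac13(1-\tau_1^2)|\tau_2|-\frac12\tau_1$, is a self-contained confirmation the paper does not give.
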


\begin{proof}
Let $f\in \mathcal{P}^{*}$. In view of \eqref{log1}, \eqref{a2}, and \eqref{a3}, we deduce that
\begin{align}\label{lge1}
|\gamma_2| - |\gamma_1| &= \frac{1}{12}\big(|2c_2-c_1^2| - 3 |c_1|\big) \nonumber\\
&= \frac{1}{12} \Phi(c_1, c_2),
\end{align}
where 
\[
\Phi(c_1, c_2) = |K c_1^2 + L c_2| - |J c_1|,
\]
with \(K = -1\), \(L = 2\), and \(J = 3\). Moreover, we have $M = |4K + 2L| = | -4 + 4| = 0.$
We observe that \(|2K + L| = | -2 + 2| = 0\) and \(|L| + J = 2 + 3 = 5\), so that $|2K + L| < |L| + J.$ 
Furthermore, $J^2 = 9$ and $2|L|(M + 2|L|) = 4(0+4) = 16$, so the condition \(J^2 \leq 2|L|(M + 2|L|)\) is satisfied. By Lemma \ref{L6}, it follows that
\[
\Phi(c_1, c_2) \leq 2|L| = 4 \quad \text{and} \quad -\Phi(c_1, c_2) \leq 2J \sqrt{\frac{2|L|}{M + 2|L|}} = 6 \sqrt{\frac{4}{4}} = 6.
\]

Therefore, from \eqref{lge1}, we obtain 
\[
-\frac{1}{2} \leq |\gamma_2| - |\gamma_1| \leq \frac{1}{3}.
\]
The upper bound is sharp for the function \(f_2\) defined in (\ref{f2}), whereas the lower bound is attained for the function \(f_{1}\) defined in \eqref{f1}. 
\end{proof}

\begin{theo}\label{T4} Let \( f \in \mathcal{P}^{*} \) and \( \Gamma_n \ (n = 1, 2) \) be defined by (\ref{IG1}). Then 
\[
-\frac{1}{\sqrt{10}} \leq |\Gamma_2| - |\Gamma_1| \leq \frac{1}{3}.
\]
The inequalities are sharp.
\end{theo}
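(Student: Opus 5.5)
The plan is to reduce the statement to Lemma~\ref{L6}, exactly as in the proof of Theorem~\ref{T3}, and then to exhibit extremal functions for both bounds.

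\textbf{Reduction.} From \eqref{IG1} and \eqref{a2}, the proof of Theorem~\ref{T2} already gives
\[
|\Gamma_2|=\tfrac16\bigl|c_2-\tfrac54 c_1^2\bigr|=\tfrac1{24}\bigl|4c_2-5c_1^2\bigr|,
\qquad
|\Gamma_1|=\tfrac14|c_1|=\tfrac1{24}\,|6c_1|.
\]
Hence
\[
|\Gamma_2|-|\Gamma_1|=\tfrac1{24}\,\Phi(c_1,c_2),
\qquad
\Phi(c_1,c_2)=|Kc_1^2+Lc_2|-|Jc_1|,
\]
with $K=-5$, $L=4$ and $J=6$. This gives $M=|4K+2L|=12$.

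\textbf{Applying Lemma~\ref{L6}.} For the upper bound, we have $|2K+L|=6<10=|L|+J$. So the ``otherwise'' case applies, giving $\Phi\le 2|L|=8$ and therefore $|\Gamma_2|-|\Gamma_1|\le 1/3$.

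For the lower bound, first note that $J=6<M+2|L|=20$, so the first alternative fails. Next, $J^2=36\le 2|L|(M+2|L|)=160$, so the second alternative applies:
\[
-\Phi\le 2J\sqrt{\tfrac{8}{20}}=12\sqrt{\tfrac25}=\tfrac{12\sqrt{10}}{5}.
\]
Dividing by $24$ gives $|\Gamma_2|-|\Gamma_1|\ge -\tfrac{\sqrt{10}}{10}=-\tfrac1{\sqrt{10}}$.

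\textbf{Sharpness.} I expect this to be the main point needing care. For the upper bound, take $f_2$ from \eqref{f2}. There $c_1=0$ and $c_2=2$, so $\Gamma_1=0$ and $|\Gamma_2|=|a_3|/2=1/3$.

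For the lower bound, I would use Lemma~\ref{L1} with $\tau_1=t\in[0,1)$ and $\tau_2=1$. Then $c_1=2t$ and $c_2=2$, so
\[
\Phi=|8-20t^2|-12t.
\]
On $t^2\le 2/5$ this equals $8-20t^2-12t$, which is decreasing. At $t=\sqrt{2/5}$ it equals $-12\sqrt{2/5}$, matching the bound, so equality is attained.

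The corresponding Carath\'eodory function, from the uniqueness statement in Lemma~\ref{L1}, is
\[
p(z)=\frac{1+2tz+z^2}{1-z^2},\qquad t=\sqrt{2/5}.
\]
The extremal function is then defined by
\[
f'(z)=e^{w(z)}\bigl(1+\sin w(z)\bigr),\qquad w=\frac{p-1}{p+1},\qquad f(0)=0.
\]
Here
\[
w(z)=\frac{z(t+z)}{1+tz},
\]
which is a Schwarz function, so $f\in\mathcal P^*$. A direct check should confirm $a_2=t$ and $a_3=\tfrac23-\tfrac{t^2}{6}$, which yields $|\Gamma_2|-|\Gamma_1|=-1/\sqrt{10}$.
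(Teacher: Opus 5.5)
Your argument is correct, and the bound itself is proved exactly as in the paper. Both proofs write $|\Gamma_2|-|\Gamma_1|=\tfrac{1}{24}\Phi(c_1,c_2)$ and apply Lemma~\ref{L6}. Your $(K,L,J)=(-5,4,6)$ is the paper's $(5,-4,6)$ with the sign flipped inside the modulus, so it gives the same $M=12$ and the same cases.

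Your arithmetic $12\sqrt{2/5}=12\sqrt{10}/5$ is right. The paper's intermediate value $12/\sqrt{10}$ is a slip; it should be $24/\sqrt{10}$. The paper's final bound $-1/\sqrt{10}$ is nevertheless correct.

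Where you genuinely differ is the extremal function for the lower bound, and your choice is the right one. With $\tau_1=\sqrt{2/5}$ and $\tau_2=1$ you get $c_1^2=8/5$ and $c_2=2$. Then $\Gamma_2=0$ and $|\Gamma_1|=1/\sqrt{10}$, so equality holds.

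The paper's $f_5$ corresponds to $w(z)=z(z+a)/(1+az)$ with $a=2/\sqrt7$, that is $c_1=4/\sqrt7$ and $c_2=2$. This gives $|\Gamma_2|-|\Gamma_1|=\tfrac17-\tfrac1{\sqrt7}\approx-0.235$, which is strictly larger than $-1/\sqrt{10}\approx-0.316$. So the paper's extremal function does not attain the bound, and your $w(z)=z(z+t)/(1+tz)$ with $t=\sqrt{2/5}$ is what actually establishes sharpness.
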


\begin{proof}
Let $f\in \mathcal{P}^{*}$. From \eqref{IG1}, \eqref{a2}, and \eqref{a3}, we get 
\begin{align}\label{IGe1}
|\Gamma_2| - |\Gamma_1| &=\frac{1}{24}\left| 5c_1^2-4c_2 \right|-\frac{1}{4}|c_1|\nonumber\\
 &=\frac{1}{24}\big(|5 c_1^2 - 4 c_2| - 6|c_1|\big) \nonumber\\
&= \frac{1}{24} \Phi(c_1, c_2),
\end{align}
where 
\[
\Phi(c_1, c_2) = |K c_1^2 + L c_2| - |J c_1|,
\]
with \(K = 5\), \(L = -4\), and \(J = 6\). In addition, we have $M = |4K + 2L| = |20 - 8| = 12.$
We note that \(|2K + L| = |10 - 4| = 6\) and \(|L| + J = 4 + 6 = 10\), implying that $|2K + L| \not\geq |L| + J.$ 
Moreover, $J^2 = 36 < 2|L|(M + 2|L|) = 8(12+8) = 160,$ so the condition \(J^2 < 2|L|(M + 2|L|)\) holds. By Lemma \ref{L6}, it follows that
\[
\Phi(c_1, c_2) \leq 2|L| = 8 \quad \text{and} \quad -\Phi(c_1, c_2) \leq 2J \sqrt{\frac{2|L|}{M + 2|L|}} = 12 \sqrt{\frac{8}{20}} = 12 \sqrt{\frac{2}{5}} = \frac{12}{\sqrt{10}}.
\]

Consequently, from \eqref{IGe1}, we obtain the sharp inequality
\[
-\frac{1}{\sqrt{10}} \leq |\Gamma_2| - |\Gamma_1| \leq \frac{1}{3}.
\]

Here, the upper bound is attained by the function \(f_2\) defined in \eqref{f2}, while the lower bound is realized by the function \(f_{5}\). For this extremal function, the Schwarz function is
\[
w(z)= \frac{z (2+\sqrt{7} z)}{\sqrt{7} + 2z},
\]
which satisfies $|w(z)|<1$ in $\mathbb{D}$ and $w(0)=0$. The corresponding Carath\'{e}odory function is
\[
p(z)=\frac{1+w(z)}{1-w(z)},
\]
which gives the desired equality. This implies $f_5$ is of the form: $f_5(z)= \int_{0}^{z} (1+\sin w(t))\, e^{w(t)}\, dt$.
\end{proof}

\subsection{{\bf Hankel determinants}}

In this section, we provide sharp bounds on the second-order Hankel determinant for the logarithmic coefficients for the class $\mathcal{P}^{*}$.

\begin{theo}\label{t1} Let $f\in \mathcal{P}^{*}$ and given by (\ref{eqn1}). Then 
\[\big| H_{2,1}\!\left(F_f/2\right)\big|\leq \frac{1}{9}.\]
The inequality is sharp.
\end{theo}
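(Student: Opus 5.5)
The plan is to express $H_{2,1}(F_f/2)=\gamma_1\gamma_3-\gamma_2^2$ in terms of the Carath\'eodory coefficients, parametrize them with Lemma~\ref{L1}, and maximize over the parameters. From \eqref{log1} and \eqref{a2}, as in the proof of Theorem~\ref{T1},
\[
\gamma_1=\tfrac14 c_1,\qquad \gamma_2=\tfrac16\bigl(c_2-\tfrac12c_1^2\bigr),\qquad \gamma_3=\tfrac18\bigl(c_3-\tfrac{11}{12}c_1c_2+\tfrac{3}{16}c_1^3\bigr).
\]
Hence
\[
H_{2,1}(F_f/2)=\tfrac1{32}\bigl(c_1c_3-\tfrac{11}{12}c_1^2c_2+\tfrac3{16}c_1^4\bigr)-\tfrac1{36}\bigl(c_2-\tfrac12c_1^2\bigr)^2 .
\]
The functional transforms as $H_{2,1}(F_{f_\theta}/2)=e^{4i\theta}H_{2,1}(F_f/2)$ under rotations, so its modulus is rotation invariant. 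We may therefore take $c_1=2t$ with $t=\tau_1\in[0,1]$ and substitute \eqref{c1}--\eqref{c3}.

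The key simplification is that $c_2-\frac12c_1^2=2(1-t^2)\tau_2$, so $\gamma_2^2=\frac19(1-t^2)^2\tau_2^2$. The $\tau_3$-term enters only through $c_1c_3$, with coefficient $\frac18 t(1-t^2)(1-|\tau_2|^2)$. Collecting terms, $H_{2,1}(F_f/2)$ takes the form
\[
\alpha t^4+(1-t^2)\Bigl[t\bigl(\beta t^2\tau_2+\delta\,\tau_2^2\bigr)-\tfrac19(1-t^2)\tau_2^2+\tfrac t8(1-|\tau_2|^2)\tau_3\Bigr],
\]
where $\alpha,\beta,\delta$ are explicit rationals obtained from the substitution. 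The two endpoint cases are then handled directly. For $t=1$ all $c_n=2$, and one finds $|H|=\frac1{96}<\frac19$. For $t=0$ the expression reduces to $H=-\tau_2^2/9$, which gives $|H|\le\frac19$, with equality when $|\tau_2|=1$. This equality case is exactly $f_2$ in \eqref{f2}: there $\gamma_1=\gamma_3=0$ and $\gamma_2=\frac13$, which proves sharpness.

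For $0<t<1$ I will apply the triangle inequality together with $|\tau_3|\le1$ and set $x=|\tau_2|\in[0,1]$. This yields
\[
|H|\le F(t,x):=|\alpha| t^4+(1-t^2)\Bigl[|\beta| t^3x+\bigl(|\delta| t+\tfrac19(1-t^2)\bigr)x^2+\tfrac t8(1-x^2)\Bigr].
\]
If this crude bound turns out to be too weak near $t=0$, I will instead keep the quadratic in $\tau_2$ intact and apply Lemma~\ref{L2} to $|A+B\tau_2+C\tau_2^2|+1-|\tau_2|^2$. Here $A,B,C$ are the coefficients obtained after factoring out $\frac t8(1-t^2)$, and the leftover term $-\frac19(1-t^2)^2\tau_2^2$ is absorbed into $C$.

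The main obstacle is showing $F(t,x)\le\frac19$ on $[0,1]^2$. Since $F$ is quadratic in $x$, I would first maximize in $x$: the coefficient of $x^2$ is $|\delta| t+\frac19(1-t^2)-\frac t8$, and its sign decides whether the maximum sits at $x=1$ or at an interior vertex. This reduces the problem to one-variable functions of $t$, which I would show are bounded by $\frac19$, with equality only at $t=0$, by differentiating and checking the sign of the resulting polynomial in $t$. The delicate part is near $t=0$, where the bound is attained: there the linear-in-$t$ terms must not push $F$ above $\frac19$. If the triangle-inequality version fails this check, the Lemma~\ref{L2} route, with its case analysis on $AC$, is the fallback.
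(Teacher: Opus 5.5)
Your route is the paper's route. You use the same expression for $\gamma_1\gamma_3-\gamma_2^2$ in $c_1,c_2,c_3$ and the parametrization of Lemma~\ref{L1} with $\tau_1=t\in[0,1]$. Your endpoint values are the same: $|H|=\frac1{96}$ at $t=1$, and $H=-\tau_2^2/9$ at $t=0$, which gives sharpness via $f_2$. For $0<t<1$, the paper applies Lemma~\ref{L2} with $A=\frac{t^3}{12(1-t^2)}$, $B=-\frac t6$, $C=\frac{8+t^2}{9t}$. Since $AC>0$ and $|C|>1$, this lands in the branch $Y=|A|+|B|+|C|$, which is exactly your triangle-inequality bound evaluated at $|\tau_2|=1$.

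There is, however, a slip in your ``collected'' form. The $\tau_2$-term and the $\tau_2^2$-term coming from $\gamma_1\gamma_3$ both carry $t^2$, not $t^3$ and $t$. The substitution actually gives
\[
H_{2,1}(F_f/2)=-\frac{t^4}{96}+(1-t^2)\Bigl[\frac{t^2}{48}\tau_2-\frac{t^2}{8}\tau_2^2-\frac19(1-t^2)\tau_2^2+\frac t8(1-|\tau_2|^2)\tau_3\Bigr].
\]
With your exponents, the crude bound would give $F(t,1)=\frac19+|\delta|t+O(t^2)>\frac19$ for small $t$, so your main step would fail as written. That is where your worry about $t$ near $0$ comes from, and it is an artifact of the slip, not a real difficulty.

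With the correct exponents, no Lemma~\ref{L2} fallback is needed:
\[
F(t,x)=\frac{t^4}{96}+(1-t^2)\Bigl[\frac t8+\frac{t^2}{48}\,x+\frac{(1-t)(8-t)}{72}\,x^2\Bigr].
\]
Its coefficients in $x$ are nonnegative on $[0,1]$, so $F$ is nondecreasing in $x$ and
\[
\bigl|H_{2,1}(F_f/2)\bigr|\le F(t,1)=\frac{32-22t^2-7t^4}{288}<\frac{32}{288}=\frac19,\qquad 0<t<1.
\]
This is exactly the paper's final one-variable expression. The only term linear in $t$ comes from $\tau_3$ and carries the factor $1-x^2$, so it vanishes at $x=1$. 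Once you redo the expansion, your proposal is a complete proof along the same lines as the paper.
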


\begin{proof}
Since $f \in \mathcal{P}^{*}$. Then from (\ref{log1}), (\ref{Hank1}) and (\ref{a2}) we get
\begin{eqnarray}\label{ha1}
H_{2,1}\!\left(F_f/2\right) &=& \gamma_1\gamma_3 - \gamma_2^2\nonumber\\
&=& \frac{1}{4}\left(a_2a_4 - a_3^2 + \frac{1}{12}a_2^4\right)\nonumber\\
&=&-\frac{1}{4608}(5c_1^4 +4c_1^2c_2 -144c_1c_3 +128c_2^2).
\end{eqnarray}

Using Lemma \ref{L1} and (\ref{ha1}), a simple computation shows that

\begin{align}\label{ha2}
H_{2,1}\!\left(F_f/2\right) &= \frac{1}{8}\,\tau_1(1-\tau_1^2)(1-|\tau_2|^2)\tau_3
   - \frac{1}{72}(1-\tau_1^2)(8+\tau_1^2)\tau_2^{2} \notag\\
&\quad + \frac{1}{48}\tau_1^2(1-\tau_1^2)\tau_2
   - \frac{\tau_1^4}{96} \nonumber\\
&= \frac{1}{288}\Big(
   -3\tau_1^4
   + 6\tau_1^2\tau_2(1-\tau_1^2)
   - 4\tau_2^2(1-\tau_1^2)(8+\tau_1^2) \nonumber \\
&\qquad\qquad
   + 36(1-\tau_1^2)(1-|\tau_2|^2)\tau_1\tau_3
   \Big).
\end{align}

Since the class $\mathcal{P}$ and $H_{2,1}(F_{f}/2)$ are invariant under rotation, we may assume that $c_1\in [0,2]$ (see \cite[Theorem 3]{G}), that is in view of (\ref{p1}), $\tau_1\in [0,1]$.

Now we divide the following three possible cases:

{\bf Case 1.} Suppose that $\tau_1=1$. Then from (\ref{ha2}), we easily get
\[\big|H_{2,1}(F_{f}/2)\big|=\big|\gamma_1\gamma_3 - \gamma_2^2\big|=\frac{1}{96}.\]

{\bf Case 2.} Suppose that $\tau_1=0$. Then from (\ref{ha2}), we easily get
\[\big|H_{2,1}(F_{f}/2)\big|=\big|\gamma_1\gamma_3 - \gamma_2^2\big|=\frac{8}{72}|\tau_2^2|\leq \frac{1}{9}.\]

{\bf Case 3.} Suppose that $\tau_1\in (0,1)$. Then from (\ref{ha2}) we get
\begin{eqnarray}\label{ha3} \big|H_{2,1}(F_{f}/2)\big|&\leq &\frac{1}{288}\bigg( \big|3\tau_1^4-6(1-\tau_1^2)\tau_1^2\tau_2 +4(1-\tau_1^2)(8+\tau_1^2)\tau_2^2\big| \nonumber\\
&&\;\;\;\;\;\;+36(1-\tau_1^2)(1-|\tau_2|^2)\tau_1\tau_3\bigg)\nonumber\\
&\leq & \frac{1}{8}\tau_1(1-\tau_1^2)(|A+B\tau_2+C\tau_2^2|+1-|\tau_2|^2),\end{eqnarray}
where $A=\frac{\tau_1^3}{12(1-\tau_1^2)}$, $B=-\frac{1}{6}\tau_1$ and $C=\frac{8+\tau_1^2}{9\tau_1}$. So clearly $AC>0$. It is easy to see that 
\begin{eqnarray*}|B|-2(1-|C|)&=&\frac{1}{6}\tau_1+\frac{2(8+\tau_1^2)}{9\tau_1}-2\\
&=& \frac{7\tau_1^2-36\tau_1+32}{18\tau_1}.\end{eqnarray*}
Now note that $7\tau_1^2-36\tau_1+32>0$ for all $\tau_1\in (0,1)$. Hence $|B|>2(1-|C|)$.
Applying Lemma \ref{L2} in (\ref{ha3}), we get 
\begin{eqnarray*} \big|H_{2,1}(F_{f}/2)\big|&\leq & \frac{1}{8}\tau_1(1-\tau_1^2)(|A|+|B|+|C|)\\
& \leq & \frac{1}{8}\tau_1(1-\tau_1^2)\left(\frac{\tau_1^3}{12(1-\tau_1^2)}+\frac{\tau_1}{6}+\frac{8+\tau_1^2}{9\tau_1}\right)\nonumber \\
& \leq & \left|\frac{-7\tau_1^4-22\tau_1^2+32}{288}\right|
.\end{eqnarray*}
Let $\tau_1^2=t\in (0,1)$. Then $g(t)=-7t^2-22t+32$ and so $g'(t)=-2(11+7t)<0$. Therefore $g$ is a decreasing function on $(0,1)$. Hence $\max_{t\in (0,1)}\{g(t)\}=32$.
From above equation we deduce that 
\[\big|H_{2,1}(F_{f}/2)\big|\leq \frac{32}{288}=\frac{1}{9}.\]

The bound $\big|H_{2,1}(F_{f}/2)\big| \leq \tfrac{1}{9}$ is sharp, for the function $f_2$ defined in (\ref{f2}).
\end{proof}

\begin{theo}\label{t2} Let $f\in \mathcal{P}^{*}$ and given by (\ref{eqn1}). Then 
\[\big| H_{2,1}\!\left(F_{f^{-1}}/2\right)\big|\leq \frac{11}{96}.\]
The inequality is sharp.
\end{theo}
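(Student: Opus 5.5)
The plan is to follow the proof of Theorem~\ref{t1} step by step, replacing the logarithmic coefficients by the inverse ones from \eqref{IG1}. First I express $H_{2,1}(F_{f^{-1}}/2)=\Gamma_1\Gamma_3-\Gamma_2^2$ in terms of the Carath\'eodory coefficients, using the representations
\[
\Gamma_1=-\tfrac14 c_1,\qquad \Gamma_2=-\tfrac16\bigl(c_2-\tfrac54 c_1^2\bigr),\qquad \Gamma_3=-\tfrac18\bigl(c_3+\tfrac{31}{16}c_1^3-\tfrac{35}{12}c_1c_2\bigr)
\]
already obtained in the proofs of Theorems~\ref{T2} and \ref{T4}. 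A direct expansion should give
\[
H_{2,1}\!\left(F_{f^{-1}}/2\right)=\frac{1}{4608}\bigl(79c_1^4-100c_1^2c_2+144c_1c_3-128c_2^2\bigr).
\]
As a consistency check, $c_1=c_2=c_3=2$ gives $528/4608=11/96$. This identifies $f_1$ from \eqref{f1} as the extremal function. For comparison, $f_2$ (with $c_1=0,\ c_2=2$) only gives $1/9<11/96$.

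Next I substitute \eqref{c1}--\eqref{c3} from Lemma~\ref{L1} and write the result as a polynomial in $\tau_2$ plus a $\tau_3$-term. Its shape is
\[
\frac{1}{288}\Bigl(33\tau_1^4+a\,\tau_1^2(1-\tau_1^2)\tau_2-(1-\tau_1^2)(32+b\tau_1^2)\tau_2^2+36\tau_1(1-\tau_1^2)(1-|\tau_2|^2)\tau_3\Bigr),
\]
with explicit constants $a,b$ coming from the expansion. By rotation invariance, as in Theorem~\ref{t1}, I may take $\tau_1\in[0,1]$. I then split into three cases.
\begin{itemize}
\item If $\tau_1=1$, the expression equals exactly $11/96$.
\item If $\tau_1=0$, it reduces to $\tfrac19|\tau_2|^2\le \tfrac19<\tfrac{11}{96}$.
\item If $\tau_1\in(0,1)$, the triangle inequality together with $|\tau_3|\le1$ gives the bound
\[
\tfrac18\tau_1(1-\tau_1^2)\bigl(|A+B\tau_2+C\tau_2^2|+1-|\tau_2|^2\bigr),
\]
where
\[
A=\frac{33\tau_1^3}{36(1-\tau_1^2)},\qquad B=\frac{a\tau_1}{36},\qquad C=-\frac{32+b\tau_1^2}{36\tau_1}.
\]
\end{itemize}

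In the interior case $AC<0$, so part (ii) of Lemma~\ref{L2} applies. I must determine which of its subcases holds on which subintervals of $(0,1)$, exactly as was done for $\Gamma_3$ in Theorem~\ref{T2}. Since $|C|\to\infty$ as $\tau_1\to0$, the regime $|C|\ge1$ near $0$ is likely to force the branch $R(A,B,C)$. In that branch I expect either $-|A|+|B|+|C|$ or $(|A|+|C|)\sqrt{1-B^2/(4AC)}$ to apply, depending on the sign of $|AB|-|C|(|B|-4|A|)$. Each branch yields, after multiplying by $\tfrac18\tau_1(1-\tau_1^2)$, an explicit function of $t=\tau_1^2$. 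I will check, via derivatives or by clearing the square root, that each of these functions stays below $11/96$ on its interval, and increases toward the value $11/96$ attained as $\tau_1\to1^-$.

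The main obstacle is this last step. The two candidate values $1/9$ (at $\tau_1=0$) and $11/96$ (at $\tau_1=1$) are very close, so crude triangle-inequality bounds such as $|A|+|B|+|C|$ may overshoot $11/96$ for intermediate $\tau_1$. If that happens, I would first try the sharper ``otherwise'' branch $(|A|+|C|)\sqrt{1-B^2/(4AC)}$. Failing that, I would keep $\tau_2$ and optimize directly over $|\tau_2|$ and $\arg\tau_2$ on the problematic interval.
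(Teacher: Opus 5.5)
Your proposal is correct and follows the paper's proof essentially step for step: the same expansion $\frac{1}{4608}\bigl(79c_1^4-100c_1^2c_2+144c_1c_3-128c_2^2\bigr)$, the same substitution from Lemma~\ref{L1} (your constants are $a=-42$, $b=4$, which give exactly the paper's $A=\frac{11\tau_1^3}{12(1-\tau_1^2)}$, $B=-\frac{7}{6}\tau_1$, $C=-\frac{8+\tau_1^2}{9\tau_1}$; note $|C|>1$ on all of $(0,1)$, so $R(A,B,C)$ is forced throughout), and the same split of Lemma~\ref{L2} into the branch $-|A|+|B|+|C|$ for $\tau_1^2\le \frac{-900+\sqrt{1120912}}{694}\approx 0.2287$ and the ``otherwise'' branch $(|A|+|C|)\sqrt{1-B^2/(4AC)}$ beyond it. One adjustment to your expectation: these branch bounds are not monotone (the first peaks at $\frac{859}{7584}$ when $\tau_1^2=\frac{7}{79}$, and the second dips to a minimum near $\tau_1^2\approx 0.56$ before rising to $\frac{11}{96}$ at $\tau_1=1$), but both stay below $\frac{11}{96}$, so your plan goes through; your worry about $|A|+|B|+|C|$ is also justified, since that cruder bound reaches about $0.124$.
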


\begin{proof} Since $f \in \mathcal{P}^{*}$. Then from (\ref{IG1}), (\ref{Hank1}) and (\ref{a2}), we get 
\begin{eqnarray}\label{e2.1} H_{2,1}\!\left(F_{f^{-1}}/2\right)&=&\Gamma_1\Gamma_3 - \Gamma_2^2\nonumber\\
&=& \frac{c_1c_3}{32} - \frac{c_2^2}{36} - \frac{25c_1^2c_2}{1152} + \frac{79c_1^4}{4608}. \end{eqnarray}

By Lemma \ref{L1} and (\ref{e2.1}) a simple computation shows that

\begin{eqnarray}\label{e2.2}H_{2,1}\!\left(F_{f^{-1}}/2\right)=\frac{11}{96}\rho^4 - \frac{7}{48}\rho^2(1-\rho^2)\tau_2 - \frac{(1-\rho^2)}{8} \left[ \frac{8}{9}(1-\rho^2)\tau_2^2 + \rho^2 \tau_2^2 - \rho(1-|\tau_2|^2)\tau_3 \right].
\end{eqnarray}
Here $|\tau_1| = \rho \in [0, 1]$.
Since the class $\mathcal{P}$ and $H_{2,1}(F_{f^{-1}}/2)$ are invariant under rotation, we may assume that $c_1\in [0,2]$ (see \cite[Theorem 3]{G}), that is in view of (\ref{p1}), $\rho= \tau_1\in [0,1]$.

Now we divide the following three possible cases:

{\bf Case 1.} Suppose that $\tau_1=1$. Then from (\ref{e2.2}), we easily get
\[\big|H_{2,1}(F_{f^{-1}}/2)\big|=\big|\Gamma_1\Gamma_3 - \Gamma_2^2\big|= \frac{11}{96} \approx 0.1146.\]

{\bf Case 2.} Suppose that $\tau_1=0$. Then from (\ref{e2.2}), we easily get
\[\big|H_{2,1}(F_{f^{-1}}/2)\big|=\big|\Gamma_1\Gamma_3 - \Gamma_2^2\big|=\frac{1}{9} \approx 0.1111.\]

{\bf Case 3.} Suppose that $\tau_1\in (0,1)$. Then from (\ref{e2.2}) we get
\begin{eqnarray}\label{e2.3} \big|H_{2,1}(F_{f^{-1}}/2)\big|&\leq \frac{\rho(1-\rho^2)}{8} \left( |A + B\tau_2 + C\tau_2^2| + 1 - |\tau_2|^2 \right),\end{eqnarray}
where $A = \frac{11\rho^3}{12(1-\rho^2)}$, $B=-\frac{7}{6}\rho$ and $C=-\frac{8+\rho^2}{9\rho}$. Observe that $AC<0$. Hence we can apply case (ii) of Lemma \ref{L2}. Next, we check all the conditions of case (ii).

\begin{enumerate}

\item[(a)] Observe that
\[
1-|C|
=1-\frac{8+\rho^2}{9\rho}
=\frac{9\rho-8-\rho^2}{9\rho}
=\frac{-(\rho-1)(\rho-8)}{9\rho}.
\]
Since $\rho\in(0,1)$, it follows that $1-|C|<0$ for all $\rho\in(0,1)$.
Further, we have
\[
|B|-2(1-|C|)=|B|-2(\text{negative quantity}),
\]
which represents the sum of two positive terms and therefore cannot be negative.
Consequently, the inequality
\[
|B|<2(1-|C|)
\]
cannot be satisfied for any $\rho\in(0,1)$.

\item[(b)] 
We begin by analyzing the second term in the minimum expression. Consider the quantity
\[
-4AC(C^{-2}-1).
\]
Since $AC<0$, it follows that $-4AC>0$. For $\rho\in(0,1)$, we have
\[
|C|=\frac{8+\rho^2}{9\rho}.
\]
To verify whether $|C|>1$, observe that
\[
8+\rho^2>9\rho \quad \Longleftrightarrow \quad \rho^2-9\rho+8>0.
\]
The roots of the quadratic equation $\rho^2-9\rho+8=0$ are $1$ and $8$. Hence, for all
$\rho\in(0,1)$, the inequality holds, implying that $|C|>1$. Consequently, $C^2>1$ and therefore
$C^{-2}<1$, which shows that $(C^{-2}-1)<0$.

Since
\[
B^2=\frac{49\rho^2}{36}\geq 0 \quad \text{for all } \rho\in(0,1),
\]
it cannot be less than a negative quantity. Thus, the inequality
\[
B^2<-4AC(C^{-2}-1)
\]
is never satisfied. As a result, the condition
\[
B^2<\min\{4(1+|C|)^2,\,-4AC(C^{-2}-1)\}
\]
cannot hold for any $\rho\in(0,1)$.

\item[(c)] Observe that the inequality
\[
|C|(|B|+4|A|)-|AB|\leq 0
\]
is equivalent to
\[
171\rho^4-508\rho^2-224\geq 0.
\]
Setting $u=\rho^2$, the above inequality reduces to a quadratic in $u$, whose positive root yields
\[
\rho=\sqrt{3.36}\approx 1.833>1.
\]
Since $\rho\in(0,1)$, this condition cannot be satisfied. Therefore, the inequality
\[
|C|(|B|+4|A|)-|AB|\leq 0
\]
never holds for any admissible value of $\rho$.

\item[(d)] Note that the inequality $|AB| \leq |C|(|B|-4|A|)$ translates to: 
\[
\frac{77\rho^4}{72(1-\rho^2)} \leq \left( \frac{8+\rho^2}{9\rho} \right) \left( \frac{7\rho}{6} - \frac{11\rho^3}{3(1-\rho^2)} \right).
\]
Simplifying we get 
\[347\rho^4 + 900\rho^2 - 224 \leq 0.\] 
Let $x = \rho^2$. We solve the quadratic equation $347x^2 + 900x - 224 = 0$: 
\[x = \frac{-900 \pm \sqrt{900^2 - 4(347)(-224)}}{2(347)} = \frac{-900 \pm \sqrt{810000 + 310912}}{694} = \frac{-900 \pm \sqrt{1120912}}{694}.\]
Thus
\[\rho^2 \leq \frac{-900 + \sqrt{1120912}}{694} \approx 0.2287.\]
Hence this inequality holds true for $\rho \in [0, \rho_0]$, where $\rho_0 \approx 0.478$.

\begin{align*}\label{e2.4}
\bigl|H_{2,1}(F_{f^{-1}}/2)\bigr|
&= \Psi(\rho) \\
&= \frac{\rho(1-\rho^2)}{8}
\left[
-\frac{11\rho^3}{12(1-\rho^2)}
+ \frac{7\rho}{6}
+ \frac{8+\rho^2}{9\rho}
\right] \\
&= -\frac{79}{288}\rho^4 + \frac{7}{144}\rho^2 + \frac{1}{9}.
\end{align*}

Let
\begin{equation}
    g(t) = -\frac{79}{288}t^2 + \frac{7}{144}t + \frac{1}{9}.
\end{equation}
Differentiating $g(t)$ with respect to $t$, we obtain:
\begin{equation}
    g'(t) = -\frac{79}{144}t + \frac{7}{144}.
\end{equation}
Setting $g'(t) = 0$ yields the critical point:
\begin{equation}
    t_0 = \frac{7/144}{79/144} = \frac{7}{79}.
\end{equation}
We observe that $t_0 \approx 0.0886$. Note that $t_0 < \rho_0^2$ (since $0.0886 < 0.228$). Furthermore, since the coefficient of $t^2$ is negative ($-\frac{79}{288} < 0$), the function $g(t)$ represents a downward-opening parabola, implying the global maximum on the interval occurs at $t_0=\frac{7}{79}$.
Therefore 
\begin{align*}
    \Psi_{\max} = g\left(\frac{7}{79}\right) &= -\frac{79}{288}\left(\frac{7}{79}\right)^2 + \frac{7}{144}\left(\frac{7}{79}\right) + \frac{1}{9} \\
    &= \frac{859}{7584}.
\end{align*}
Consequently, we conclude that
\begin{equation}
    \bigl|H_{2,1}(F_{f^{-1}}/2)\bigr| \leq \frac{859}{7584} \approx 0.113.
\end{equation}

\item[(e)] Now consider the region \(\rho \in (\rho_0,1)\), where \(\rho_0 \approx 0.478\).
In this range, the extremum corresponding to the disk parameter \(\tau_2\)
is attained in the interior of the unit disk \(\mathbb{D}\).
Accordingly, we employ the functional $Y(A,B,C)=(|C|+|A|)\sqrt{1-\frac{B^2}{4AC}}.$

Let $\mathcal{K}(\rho)=-\frac{B^2}{4AC}.$ A direct computation yields
\[
\mathcal{K}(\rho)
= \frac{\left(-\frac{7\rho}{6}\right)^2}
{-4\left(\frac{11\rho^3}{12(1-\rho^2)}\right)
\left(-\frac{8+\rho^2}{9\rho}\right)}
= \frac{147(1-\rho^2)}{44(8+\rho^2)}.
\]
Hence,
\[
\sqrt{1-\frac{B^2}{4AC}}
= \sqrt{1+\mathcal{K}(\rho)}
= \sqrt{\frac{499-103\rho^2}{44(8+\rho^2)}}.
\]
Consequently,
\[
\bigl|H_{2,1}(F_{f^{-1}}/2)\bigr|
= \Psi(\rho)
= \frac{\rho(1-\rho^2)}{8} Y(A,B,C).
\]
Substituting the explicit expressions for \(A\), \(|B|\), and \(|C|\), we obtain
\[
\Psi_1(\rho)
= \frac{1}{8}
\left(
\frac{(8+\rho^2)(1-\rho^2)}{9}
+ \frac{11\rho^4}{12}
\right)
\sqrt{\frac{499-103\rho^2}{44(8+\rho^2)}}.
\]

Let $t = \rho^2$. Given the interval $\rho \in (\rho_0,1)$, we have $t \in (\rho_0^2,1)$ where $\rho_0^2 \approx 0.228$. We rewrite $\Psi(\rho)$ in terms of $t$ as follows:
\begin{align*}
\Psi_2(t) &= \frac{1}{8} \left( \frac{(8+t)(1-t)}{9} + \frac{11t^2}{12} \right) \sqrt{\frac{499-103t}{44(8+t)}} \\
&= \frac{1}{8} \left( \frac{32 - 28t + 29t^2}{36} \right) \sqrt{\frac{499-103t}{44(8+t)}} \\
&= \frac{1}{288\sqrt{44}} (29t^2 - 28t + 32) \sqrt{\frac{499-103t}{8+t}}.
\end{align*}

Differentiating with respect to $t$, we get 
\[
\Psi'(t) =\frac{\sqrt{11} \sqrt{\frac{499 - 103 t}{t + 8}} \left(11948 t^{3} + 70299 t^{2} - 518316 t + 265888\right)}{12672 \left(103 t^{2} + 325 t - 3992\right)}.
\]

\begin{figure}[H]
  \centering
  \includegraphics[width=0.5\linewidth]{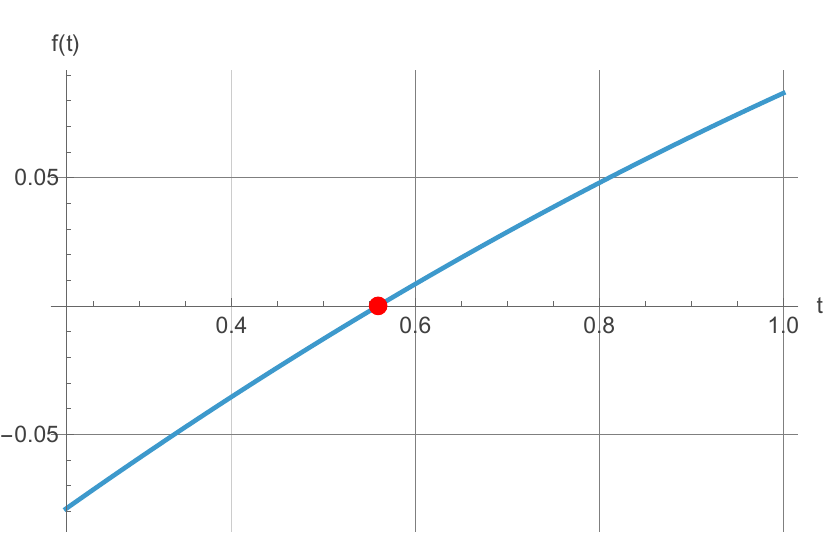}
  \caption{Graph of $\Psi'(t)$. The red mark indicates the point $p \approx 0.559475$ where $\Psi'(t)=0$.}
  \label{fig:psi_prime}
\end{figure}

From Figure~\ref{fig:psi_prime}, we can see that $\Psi(t)$ is decreasing on $(\rho_0^2, p)$ and increasing on $(p,1)$, where $p\approx 0.559475$ denotes the red mark in Figure 1. Therefore,
\[
\bigl|H_{2,1}(F_{f^{-1}}/2)\bigr| \leq \max\{\Psi_2(\rho_0^2),\Psi_2(1)\}=\Psi_2(1)=\frac{11}{96}.
\]

Therefore, the sharp upper bound for the second Hankel determinant of
logarithmic coefficients associated with the inverse function is
\[
\bigl|H_{2,1}(F_{f^{-1}}/2)\bigr|
\le \frac{11}{96}.
\]
The bound is sharp for the function $f_1$ defined in (\ref{f1}).
\end{enumerate}
\end{proof}

\subsection{{\bf Third-order Hermitian-Toeplitz determinant for the class $\mathcal{P}^{*}$}}

In this section, we establish the sharp upper and lower bounds of $T_{3,1}(f)$ for the class of functions $\mathcal{P}^{*}$.

\begin{theo}\label{T5} Let $f\in \mathcal{P}^{*}$ and given by (\ref{eqn1}). Then 
\[
-\frac{9}{35} \leq T_{3,1}(f) \leq 1.
\]
The inequalities are sharp.
\end{theo}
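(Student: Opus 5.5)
The plan is to reduce $T_{3,1}(f)$ to a function of $\tau_1$ and $\tau_2$ from Lemma~\ref{L1}. The key step is an algebraic identity that turns $T_{3,1}(f)$ into a difference of two squares.

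\textbf{Setting up.} As in the proof of Theorem~\ref{T1}, write $f'=e^{w}(1+\sin w)$ with $w=(p-1)/(p+1)$ and $p\in\mathcal{P}$. By (\ref{a2}),
\[
a_2=\tfrac12 c_1,\qquad a_3=\tfrac13 c_2-\tfrac1{24}c_1^2 .
\]
Under the rotation $f_\theta(z)=e^{-i\theta}f(e^{i\theta}z)$ we get $a_2\mapsto e^{i\theta}a_2$ and $a_3\mapsto e^{2i\theta}a_3$. Hence $a_2^2\overline{a_3}$, $|a_2|$ and $|a_3|$ are all unchanged, and so is $T_{3,1}(f)$ by (\ref{HTexamples}). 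We may therefore assume $c_1\in[0,2]$, that is, $\tau_1=t\in[0,1]$. Substituting (\ref{c1}) and (\ref{c2}) gives
\[
a_2=t,\qquad a_3=\tfrac12 t^2+\tfrac23(1-t^2)\tau_2,\qquad \tau_2\in\overline{\mathbb{D}}.
\]

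\textbf{The key identity.} Since $a_2^2=t^2$ is real, we have $2\Re(a_2^2\overline{a_3})=2t^2\Re a_3$. Completing the square in (\ref{HTexamples}) then gives
\[
T_{3,1}(f)=1-2t^2+2t^2\Re a_3-|a_3|^2=(1-t^2)^2-\bigl|a_3-t^2\bigr|^2 ,
\]
where
\[
a_3-t^2=-\tfrac12 t^2+\tfrac23(1-t^2)\tau_2 .
\]
With this identity both bounds become elementary.

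\textbf{Upper bound.} The identity gives at once $T_{3,1}(f)\le (1-t^2)^2\le 1$. Equality forces $t=0$ and $a_3=0$, i.e.\ $c_1=c_2=0$. This is realized by $w(z)=z^3$, i.e.\ by $f_3$ in (\ref{f3}), for which $a_2=a_3=0$ and $T_{3,1}(f_3)=1$.

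\textbf{Lower bound.} By the triangle inequality, $|a_3-t^2|\le \tfrac12 t^2+\tfrac23(1-t^2)=\tfrac{4-t^2}{6}$, with equality at $\tau_2=-1$. With $s=t^2\in[0,1]$ this gives
\[
T_{3,1}(f)\ge h(s):=(1-s)^2-\frac{(4-s)^2}{36}.
\]
Minimize $h$ on $[0,1]$ as follows.

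The equation $h'(s)=-2(1-s)+\frac{4-s}{18}=0$ gives the critical point $s=32/35$.

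Since $h''=2-\tfrac1{18}>0$, this critical point is the minimum.

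Its value is $h(32/35)=\frac{9}{1225}-\frac{324}{1225}=-\frac{9}{35}$.

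So $T_{3,1}(f)\ge -9/35$.

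\textbf{Sharpness of the lower bound.} Take $\tau_1=\sqrt{32/35}\in\mathbb{D}$ and $\tau_2=-1\in\mathbb{T}$. Lemma~\ref{L1} then supplies the unique
\[
p(z)=\frac{1+(\tau_1\tau_2+\tau_1)z+\tau_2z^2}{1+(\tau_1\tau_2-\tau_1)z-\tau_2z^2}=\frac{1-z^2}{1-2\tau_1 z+z^2}.
\]
The extremal function is $f(z)=\int_0^z e^{w(\zeta)}(1+\sin w(\zeta))\,d\zeta$ with $w=(p-1)/(p+1)$.

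The only delicate point is confirming that this $p$ belongs to $\mathcal{P}$. Lemma~\ref{L1} guarantees this, since $p$ is the function it associates with $\tau_1\in\mathbb{D}$ and $\tau_2\in\mathbb{T}$. Beyond that, the proof rests on finding the square-completion identity. Without it one would have to push through the general $Y(A,B,C)$ machinery of Lemma~\ref{L2}, and that is what I would expect to be the main obstacle otherwise.
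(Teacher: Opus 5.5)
Your proof is correct, but it reaches the bounds by a different and shorter route than the paper. The paper substitutes Lemma~\ref{L1} to get $576\,T_{3,1}(f)=9(3x^2-32x+64)+24(4-x)x\,\Re\tau_2-16(4-x)^2|\tau_2|^2$ with $x=c_1^2$, and then replaces $\Re\tau_2$ by $\pm|\tau_2|$. For the upper bound it runs a two-variable critical-point and boundary analysis of $F(x,y)$ on $[0,4]\times[0,1]$. For the lower bound it simply sets $|\tau_2|=1$, relying without comment on the expression being decreasing in $|\tau_2|$, and minimizes $G(x,1)=35x^2-256x+320$.

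Your completed-square identity holds for every $f\in\mathcal{A}$ in the form $T_{3,1}(f)=(1-|a_2|^2)^2-|a_3-a_2^2|^2$. It gives the upper bound immediately with no calculus, and it also characterizes equality as exactly $c_1=c_2=0$. For the lower bound, your triangle-inequality step makes the reduction to $|\tau_2|=1$ explicit. Your $h$ is the paper's function in disguise, since $576\,h(x/4)=G(x,1)$. Correspondingly, your minimizer $s=32/35$ is the paper's $x_0=128/35$. Your extremal $w(z)=z(\tau_1-z)/(1-\tau_1 z)$ with $\tau_1=\sqrt{32/35}$ is precisely the paper's $f_6$.

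The paper's approach offers only a mechanical template that does not require spotting any structural identity. Here your argument is both simpler and more informative.
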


\begin{proof} Since $f \in \mathcal{P}^{*}$, proceeding in a similar manner as in Theorem \ref{T1}, we obtain $a_2$ and $a_3$ from (\ref{a2}) and (\ref{a3}), respectively.

It is important to note that both the class 
$\mathcal{P}$ of functions with positive real part and the class $\mathcal{P}^{*}$ 
are invariant under rotations. Hence, without loss of generality, since $|c_{n}| \leq 2$, 
we may assume $0 \leq c_{1} \leq 2$. 

In view of Lemma \ref{L1} together with (\ref{a2}), (\ref{a3}) and (\ref{HTexamples}), $T_{3,1}(f)$ can be written as
\begin{eqnarray}\label{tha3}  T_{3,1}(f) &=& 2\Re\!\left(a_2^2 \overline{a_3}\right) - 2|a_2|^2 - |a_3|^2 + 1\nonumber\\
&=& \frac{9(3c_1^4-32c_1^2+64)+24(4-c_1^2)c_1^2\Re\!(\tau_2)-16(4 - c_{1}^{2})^2|\tau_2|^2}{576}.\end{eqnarray}

Next, we aim to maximize the right-hand side of (\ref{tha3}). Since $\Re \tau_2\leq |\tau_2|$, it follows from (\ref{tha3}) that
\begin{eqnarray}\label{te.9} T_{3,1}(f) &\leq& \frac{1}{576}\left( 9(3c_1^4-32c_1^2+64)+24(4-c_1^2)c_1^2 |\tau_2|-16(4 - c_{1}^{2})^2\,|\tau_2|^2\right)\\
&=& \frac{1}{576} F(c_1^2, |\tau_2|)\nonumber.\end{eqnarray}
Setting $c_1^2=:x\in [0,4]$ and $|\tau_2|=:y\in [0,1]$, then $F(c_1^2, |\tau_2|)$ can be written as follows
\begin{eqnarray}\label{te.10} F(x,y)=9(3x^2-32x+64)+24(4-x)xy-16(4 - x)^2 y^2.\end{eqnarray}
Now, differentiating partially (\ref{te.10}) with respect to $x$ and $y$ we obtain 
\[
\frac{\partial F(x,y)}{\partial x} = 54x-288+96y-48xy+128y^2-32xy^2
\]
and 
\[
\frac{\partial F(x,y)}{\partial y} = 24x(4-x)-32(4-x)^2y.
\]
There are no critical points in the interior of the domain $(0,4)\times(0,1)$.
Therefore, the maximum and minimum value of $F(x,y)$ must occur on the boundaries of $[0,4] \times [0,1]$.

On the boundary of the rectangular region $[0,4] \times [0,1]$, the function $F(x,y)$ takes the following forms:
\[F(0,y)=576 - 256y^{2}\leq 576,\;\;\;F(4,y)=-144 \;\;\text{for all}\;y\in [0,1]\]
and
\[ F(x,0)= 27x^{2}-288x+576,\;\;\;F(x,1)= 320-64x-13x^2\leq 320 \;\;\text{for all}\;x\in [0,4].\]
Therefore the absolute maximum of $F(x,y)$ is $576$, occurring at the boundary point $(0,0)$.
Clearly $\max_{x\in [0,4]}F(x, 0)=576$.

From the above discussion, we deduce that 
\[T_{3,1}(f)\leq \frac{1}{576}\max\{ 576, -144, 320\}=1.\]

Next, we aim to minimize the right-hand side of (\ref{tha3}). Since $-\Re \tau_2\leq |\tau_2|$, it follows from (\ref{tha3}) that
\begin{eqnarray*}\label{te.11} T_{3,1}(f) &\geq& \frac{9(3c_1^4-32c_1^2+64)-24(4-c_1^2)c_1^2|\tau_2|-16(4 - c_{1}^{2})^2|\tau_2|^2}{576}\\
&\geq& \frac{1}{576} G(c_1^2, 1).\end{eqnarray*}
Setting $c_1^2=:x\in [0,4]$, then $G(x,1)$ can be written as follows
\begin{eqnarray*}\label{te.12} G(x,1)= 9(3x^2-32x+64)-24(4-x)x-16(4 - x)^2 = 35x^2-256x+320.\end{eqnarray*}
Here, we observe that $G'(x,1)=0$ for $x=\frac{128}{35}$. Moreover, since $G''(x,1)=70>0$, it follows that $G(x,1)$ attains its minimum at $x_{0}=\frac{128}{35}$, where  
\[
G(x_{0},1)=-\frac{5184}{35}.
\]
From the above discussion, we deduce that 
\[T_{3,1}(f)\geq \frac{-5184}{35 \times 576}=-\frac{9}{35}.\]
Hence proved.

The upper bound of $T_{3,1}(f)$ is sharp for the function $f_{3}$ defined in (\ref{f3}). For this function, $c_1=c_2=0$, $c_3=2$, giving $T_{3,1}(f)=1$.

Equality for the lower bound is attained for the function $f_6$, where 
\[f_6(z)= \int_{0}^{z} (1+\sin w(t))\, e^{w(t)}\,dt \]
with $w(z)=\frac{z\left(\sqrt{\frac{128}{35}}-2z\right)}{2-\sqrt{\frac{128}{35}}\,z}.$ This Schwarz function satisfies $w(0)=0$ and $|w(z)|<1$ in $\mathbb{D}$, and the corresponding Carath\'{e}odory function gives $c_1^2=128/35$ and $|\tau_2|=1$.
\end{proof}

\section*{Summary of Main Results and Extremal Functions}

\begin{table}[!ht]
\centering
\caption{Summary of sharp bounds and extremal functions for the class $\mathcal{P}^{*}$.}
\label{tab:summary}
\begin{tabular}{|c|c|c|c|}
\hline
\textbf{Result} & \textbf{Coefficient/Determinant} & \textbf{Sharp Bound} & \textbf{Extremal Function(s)} \\ \hline
Theorem~\ref{T1} (i) & $|\gamma_1|$ & $\displaystyle \frac{1}{2}$ & $f_1(z)=\displaystyle\int_0^z (1+\sin t)e^t\,dt$ \\ \hline
Theorem~\ref{T1} (ii) & $|\gamma_2|$ & $\displaystyle \frac{1}{3}$ & $f_2(z)=\displaystyle\int_0^z (1+\sin t^2)e^{t^2}\,dt$ \\ \hline
Theorem~\ref{T1} (iii) & $|\gamma_3|$ & $\displaystyle \frac{1}{4}$ & $f_3(z)=\displaystyle\int_0^z (1+\sin t^3)e^{t^3}\,dt$ \\ \hline
Theorem~\ref{T1} (iv) & $|\gamma_4|$ & $\displaystyle \frac{1}{5}$ & $f_4(z)=\displaystyle\int_0^z (1+\sin t^4)e^{t^4}\,dt$ \\ \hline
Theorem~\ref{T2} (i) & $|\Gamma_1|$ & $\displaystyle \frac{1}{2}$ & $f_1$ \\ \hline
Theorem~\ref{T2} (ii) & $|\Gamma_2|$ & $\displaystyle \frac{1}{2}$ & $f_1$ \\ \hline
Theorem~\ref{T2} (iii) & $|\Gamma_3|$ & $\displaystyle \frac{35}{48}$ & $f_1$ \\ \hline
Theorem~\ref{T3} & $|\gamma_2|-|\gamma_1|$ & $\displaystyle -\frac{1}{2} \le \cdot \le \frac{1}{3}$ & Lower: $f_1$; Upper: $f_2$ \\ \hline
Theorem~\ref{T4} & $|\Gamma_2|-|\Gamma_1|$ & $\displaystyle -\frac{1}{\sqrt{10}} \le \cdot \le \frac{1}{3}$ & Lower: $f_5$; Upper: $f_2$ \\ \hline
Theorem~\ref{t1} & $\left| H_{2,1}(F_f/2) \right|$ & $\displaystyle \frac{1}{9}$ & $f_2$ \\ \hline
Theorem~\ref{t2} & $\left| H_{2,1}(F_{f^{-1}}/2) \right|$ & $\displaystyle \frac{11}{96}$ & $f_1$ \\ \hline
Theorem~\ref{T5} & $T_{3,1}(f)$ & $\displaystyle -\frac{9}{35} \le \cdot \le 1$ & Lower: $f_6$; Upper: $f_3$ \\ \hline
\end{tabular}
\end{table}

\noindent Here the extremal functions are defined as follows:
\begin{align*}
f_1(z) &= \int_0^z (1+\sin t)e^t\,dt = z + z^2 + \frac12 z^3 + \frac18 z^4 + \cdots,\\
f_2(z) &= \int_0^z (1+\sin t^2)e^{t^2}\,dt = z + \frac23 z^3 + \frac{3}{10}z^5 + \cdots,\\
f_3(z) &= \int_0^z (1+\sin t^3)e^{t^3}\,dt = z + \frac12 z^4 + \frac{3}{14}z^7 + \cdots,\\
f_4(z) &= \int_0^z (1+\sin t^4)e^{t^4}\,dt = z + \frac25 z^5 + \frac16 z^9 + \cdots,\\
f_5(z) &= \int_0^z (1+\sin w(t))e^{w(t)}\,dt, \quad w(z)=\frac{z(2+\sqrt7 z)}{\sqrt7 + 2z},\\
f_6(z) &= \int_0^z (1+\sin w(t))e^{w(t)}\,dt, \quad w(z)=\frac{z\left(\sqrt{\frac{128}{35}}-2z\right)}{2-\sqrt{\frac{128}{35}}\,z}.
\end{align*}

For $f_5$ and $f_6$, the functions $w(z)$ are Schwarz functions (i.e., $w(0)=0$ and $|w(z)|<1$ in $\mathbb{D}$), which can be verified by the Schwarz-Pick lemma or direct computation.

\section*{Conclusion}

In this paper, we obtained sharp bounds for the logarithmic coefficients $\gamma_n$ ($n=1,2,3,4$) and the inverse logarithmic coefficients $\Gamma_n$ ($n=1,2,3$) for functions in the class $\mathcal{P}^{*}$. We also established sharp estimates for the differences $|\gamma_2|-|\gamma_1|$ and $|\Gamma_2|-|\Gamma_1|$, as well as for the second-order Hankel determinants associated with these coefficients. Finally, we determined sharp upper and lower bounds for the third-order Hermitian--Toeplitz determinant $T_{3,1}(f)$. All bounds are sharp, with extremal functions explicitly identified. These results extend and refine known estimates for related subclasses of analytic functions.

\section*{{\bf Declarations}}

\subsection*{Funding}
The first author acknowledges financial support from the Council of Scientific and Industrial Research (CSIR), New Delhi, India, under Grant No. 09/1224(16975)/2023-EMR-I.

\subsection*{Data Availability Statement}
Data sharing is not applicable to this article as no datasets were generated or analyzed during the current study.

\subsection*{Conflict of Interest}
The authors declare that they have no conflict of interest.

\subsection*{Author Contributions}
All authors contributed equally to this work and approved the final manuscript.

\end{document}